\newcounter{claim-counter}
\theoremstyle{plain}
\newtheorem{thm}{Theorem}[section] 
\newtheorem{theorem}{Theorem}[section]
\newtheorem{lem}[thm]{Lemma}
\newtheorem{lemma}[thm]{Lemma}
\newtheorem{prop-defi}[thm]{Definition \& Proposition}
\newtheorem{prop}[thm]{Proposition}
\newtheorem*{thm*}{Theorem}
\newtheorem*{prop*}{Proposition}
\newtheorem*{cor*}{Corollary}
\declaretheorem[style=theorem,name={Theorem}]{theoremletter}
\newtheorem{introcorollary}[theoremletter]{Corollary}
\theoremstyle{definition}
\newtheorem{defi}[thm]{Definition}
\newtheorem{rem}[thm]{Remark}
\newtheorem{remark}[thm]{Remark}
\newtheorem*{claim*}{Claim}
\newcommand{\NN}{{\mathbb N}}
\newcommand{\RR}{{\mathbb R}}
\newcommand{\CC}{{\mathbb C}}
\newcommand{\MM}{{\mathbb M}}
\newcommand{\F}{{\mathcal F}}
\newcommand{\E}{{\mathcal E}}
\renewcommand{\H}{\mathcal{H}}
\newcommand{\I}{\mathcal{I}}
\renewcommand{\L}{{\mathcal L}}
\newcommand{\tens}{\otimes}
\newcommand{\To}{\longrightarrow}
\newcommand{\del}{{\partial}}
\renewcommand{\d}{\operatorname{d}\hspace{-0.03cm}}
\renewcommand{\leq}{\leqslant}
\renewcommand{\geq}{\geqslant}
\newcommand{\im}{{\operatorname{im}}}
\newcommand{\loc}{{\operatorname{loc}}}
\renewcommand{\i}{{\operatorname{i}}}
\newcommand{\Cohom}{\operatorname{H}}
\renewcommand{\restriction}{\rvert}
\newcommand{\cl}{{\operatorname{cl}}}
\renewcommand{\restriction}{\mathord{\hspace{-0.05cm}\upharpoonright} }
\title{Cohomological induction and uniform measure equivalence}
\author{Thomas Gotfredsen}
\address{Thomas Gotfredsen, Department of Mathematics and Computer Science, University of Southern Denmark, Campusvej 55, DK-5230 Odense M, Denmark}
\email{thgot@imada.sdu.dk}
\author{David Kyed}
\address{David Kyed, Department of Mathematics and Computer Science, University of Southern Denmark, Campusvej 55, DK-5230 Odense M, Denmark}
\email{dkyed@imada.sdu.dk}
\subjclass[2010]{20F65, 22D05, 57M07, 20J06, 57T10 }
\keywords{Locally compact groups, measure equivalence, quasi-isometry, cohomology, nilpotent Lie groups}
\begin{document}

\begin{abstract}
We construct a general cohomological induction isomorphism  from a uniform measure equivalence of  locally compact, second countable, unimodular groups which, as a special case, yields that the graded cohomology algebras of quasi-isometric, connected, simply connected, nilpotent Lie groups are isomorphic. This unifies results of Shalom and Sauer and also provides new insight into the quasi-isometry classification problem for low dimensional nilpotent Lie groups.
\end{abstract}

\maketitle

\section{Introduction}
Geometric and measurable group theory originated in the the works of Gromov and is by now a well developed and important tool in the study of countable discrete groups; see e.g.~\cite{gromov-infinite-grps, furman-gromovs-measure-equivalence, Furman-OE-rigidity, ornstein-weiss} and references therein. One of the many insights from the beginning of the present century, mainly due to Shalom,  Shalom-Monod \cite{monod-shalom} and Sauer \cite{sauer},  is that group cohomology, interacts well with one of the fundamental concepts in geometric group theory, namely that of \emph{quasi-isometry} \cite{roe-lecture-notes, yves-book}. These results are mainly concerned with the class of discrete, countable groups, but in recent years, increasing emphasis has been put on the the more general setting of  locally compact groups 
 \cite{bowen-hoff-ioana, BFS-integrable, carderi-le-maitre, bader-rosendal,  KKR17, KKR18}, 
 and results concerning the interplay between group cohomology  and  quasi-isometry are now beginning to emerge in this setting as well  \cite{Bourdon-Remy, Sauer-Schrodl}.
 The present article provides a contribution to this  line of research by proving that the central results due Shalom, Shalom-Monod and Sauer mentioned above,  admit natural generalisations to the class of unimodular, locally compact, second countable groups. Actually, our primary focus will not be on quasi-isometry but rather on the related notion known as (uniform) measure equivalence, which is a measurable analogue of quasi-isometry, originally
introduced for discrete groups by Gromov in  \cite{gromov-asymptotic-invariants}. Uniform measure equivalence can be defined also for unimodular, locally compact second countable groups \cite{BFS-integrable, KKR17}, and for  compactly generated, unimodular  groups it was shown in \cite{KKR17} that, just as in the discrete case,  uniform measure equivalence implies quasi-isometry and that the two notions coincide when the groups in question are amenable.   Our primary focus will therefore be on uniformly measure equivalent topological groups and our first main result, drawing inspiration from \cite[Proposition 4.6]{monod-shalom}, is the following reciprocity principle in group cohomology; for the basic definitions concerning cohomology and uniform measure equivalence, see Section \ref{sec:preliminaries}.

\begin{theoremletter}\label{mainthm:UME-induction}
If $G$ and $H$ are uniformly measure equivalent, locally compact, second countable, unimodular groups, then for any uniform measure equivalence coupling $(\Omega, \eta, X, \mu, Y,\nu, i,j)$ and any Fr{\'e}chet $G\times H$-module $\E$ there exists an isomorphism of topological vector spaces
\[
\Cohom^n\left(G, L^2_{\loc}(\Omega, \E)^H\right)\overset{\sim}{\To} \Cohom^n\left(H, L^2_{\loc}(\Omega,\E)^G\right),
\]
for all $n\geq 0$. In particular, when $\E=\RR$ with trivial $G\times H$-action, this induces an isomorphism
\[
 \Cohom^n(G, L^2(X)) \simeq \Cohom^n(H, L^2(Y)).
\]
\end{theoremletter}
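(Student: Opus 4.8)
The plan is to recognise both sides as the continuous cohomology of the product group $G\times H$ with coefficients in the single Fr\'echet $G\times H$-module $M:=L^2_{\loc}(\Omega,\E)$, and to pass between the two descriptions by a pair of Shapiro-type collapses. The whole argument is organised around $M$, on which $G$ and $H$ act by the two commuting coordinate actions coming from the coupling $\Omega$; the two sides of the asserted isomorphism are then the $G$-cohomology of the $H$-invariants and the $H$-cohomology of the $G$-invariants of this one module.

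The technical heart is an acyclicity lemma: I would show that $\Cohom^q(H,M)=0$ for all $q\geq 1$ while $\Cohom^0(H,M)=M^H$, and symmetrically with the roles of $G$ and $H$ exchanged. Such vanishing should hold because, in a (uniform) measure equivalence coupling, $H$ acts on $\Omega$ essentially freely with a finite-measure fundamental domain $X$, so that $\Omega\cong H\times X$ as measure $H$-spaces and hence
\[
M=L^2_{\loc}(\Omega,\E)\cong L^2_{\loc}\bigl(H,\,L^2_{\loc}(X,\E)\bigr)
\]
is a module coinduced from the trivial subgroup with coefficients $V:=L^2_{\loc}(X,\E)$. A continuous-cohomology Shapiro lemma then gives $\Cohom^\bullet(H,M)\cong\Cohom^\bullet(\{e\},V)$, which vanishes in positive degrees and equals $M^H$ in degree $0$. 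Making this rigorous in the Fr\'echet-module framework is where the measurability and topological bookkeeping live: one must verify that the coinduced module is relatively injective (hence $H$-acyclic), that the resolution in play is strong, and that the uniformity of the coupling is precisely what renders the $L^2_{\loc}$-identifications and the attendant integrations continuous. I expect this acyclicity step to be the main obstacle.

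Granting the acyclicity, I would invoke the Lyndon--Hochschild--Serre spectral sequence for the extension with normal subgroup $H$ and quotient $G$,
\[
E_2^{p,q}=\Cohom^p\bigl(G,\Cohom^q(H,M)\bigr)\ \Longrightarrow\ \Cohom^{p+q}(G\times H,M).
\]
By the vanishing of the higher $H$-cohomology this sequence degenerates to its bottom row, yielding a natural isomorphism $\Cohom^n(G\times H,M)\cong\Cohom^n(G,M^H)$. Running the symmetric spectral sequence (normal subgroup $G$, quotient $H$) gives $\Cohom^n(G\times H,M)\cong\Cohom^n(H,M^G)$, and composing produces the asserted map
\[
\Cohom^n\bigl(G,L^2_{\loc}(\Omega,\E)^H\bigr)\overset{\sim}{\To}\Cohom^n\bigl(H,L^2_{\loc}(\Omega,\E)^G\bigr).
\]
To upgrade this to an isomorphism of topological vector spaces I would realise the edge isomorphisms by explicit continuous cochain maps---integration against the finite-measure fundamental domains---rather than abstractly, so that continuity and the Fr\'echet structure are transported along; the uniformity hypothesis again guarantees that the relevant integral operators are bounded.

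Finally, for the special case $\E=\RR$ with trivial action it remains to identify the coefficient modules. Since $X$ is a finite-measure fundamental domain for the $H$-action, restriction to $X$ identifies the $H$-invariants $L^2_{\loc}(\Omega)^H$ with $L^2(X)$ as $G$-modules, the $G$-action being the measure-preserving rearrangement action through the coupling cocycle, and symmetrically $L^2_{\loc}(\Omega)^G\cong L^2(Y)$ as $H$-modules; that these land in honest $L^2$-spaces rather than merely $L^2_{\loc}$ uses finiteness of the fundamental-domain measures together with the boundedness of the cocycle coming from uniformity. Substituting these identifications into the general isomorphism yields $\Cohom^n(G,L^2(X))\simeq\Cohom^n(H,L^2(Y))$, as claimed.
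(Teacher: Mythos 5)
Your overall architecture---pivoting both sides through $\Cohom^n(G\times H, M)$ for $M = L^2_{\loc}(\Omega,\E)$ and collapsing each side using the coupling's two product decompositions---is exactly the paper's strategy, and your acyclicity lemma is correct: via $j$ one has $M\restriction_H \simeq L^2_{\loc}\left(H, L^2(X,\E)\right)$, which is relatively injective by Blanc's theorem \cite[Th{\'e}or{\`e}me 3.4]{blanc}, hence $H$-acyclic (the same identification gives $L^2_{\loc}(\Omega)^H\simeq L^2(X)$ in the special case, as in the paper). The genuine gap is the step where you feed this into a Lyndon--Hochschild--Serre spectral sequence. For continuous cohomology of locally compact groups with Fr{\'e}chet coefficients there is no off-the-shelf LHS spectral sequence in this generality, and constructing one is not routine: the category of topological modules is not abelian, so Grothendieck-type machinery does not apply, and identifying the $E_1$/$E_2$-pages of the double complex $L^2_{\loc}\left(G^{p+1}\times H^{q+1},M\right)^{G\times H}$ requires the columns to remain exact after passing to $G\times H$-invariants. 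That needs strictly more than $H$-acyclicity of $M$: one needs each term $L^2_{\loc}\left(G^{p+1},M\right)$ to be relatively injective as a module over the full product $G\times H$ (or at least acyclic with $G$-equivariant control of primitives, which non-equivariant contracting homotopies do not provide). This stronger statement is precisely the paper's key lemma, proved by the explicit untwisting isomorphism $\alpha(\xi)(g,h)(g_1,\dots,g_n,x)=\xi(g_1,\dots,g_n,g)\left(h\omega_H(g^{-1},x),x\right)$, which crucially uses the ME-cocycle $\omega_H$; your proposal never engages with the $G\times H$-equivariant structure at all (the cocycle never appears), and that is where the real content of the theorem lies.

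Moreover, even granting a vector-space-level spectral sequence, degeneration only yields a continuous linear bijection (the inflation edge map), whereas the theorem asserts an isomorphism of topological vector spaces; these cohomologies are in general non-Hausdorff quotients, for which no open mapping theorem is available, so continuity of the inverse must come from somewhere. The paper gets this for free from the comparison theorem for strengthened, relatively injective resolutions \cite[III, Corollaire 1.1]{guichardet-book}: once each $L^2_{\loc}\left(G^{n+1},M\right)$ is known to be relatively injective over $G\times H$, the complex $R_G$ is a strengthened, relatively injective resolution of $M$ over $G\times H$ whose $G\times H$-invariants coincide verbatim with the $G$-invariants of the standard $L^2_{\loc}$-resolution of the $G$-module $M^H$ (since $H$ acts trivially in the $G^{n+1}$-direction); comparing $R_G$ with its mirror $R_H$ then produces the topological isomorphism with no spectral sequence whatsoever. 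If you replace your LHS step by this resolution-comparison argument---i.e.\ upgrade your acyclicity lemma to relative injectivity over the product group via the cocycle untwisting---your proof becomes the paper's.
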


As a consequence of Theorem \ref{mainthm:UME-induction} we also obtain a generalisation of \cite[Theorem 5.1]{sauer}. 
For the statement, recall that a group $G$ is said to have (Shalom's) property $H_T$ if $\underline{\Cohom}^1(G,\H)=\{0\}$ for every unitary $G$-module $\H$ with no non-trivial fixed-points; here $\underline{\Cohom}^1(G,\H)$ denotes the first \emph{reduced} cohomology of $G$ with coefficients in $\H$; i.e.~the maximal Hausdorff quotient of the ordinary cohomology $\Cohom^1(G,\H)$ (see Section \ref{sec:preliminaries} for more details).
In what follows, we will by \emph{property $H_T^n$} mean the obvious extension of property $H_T$ to higher cohomologies. Recall also that the total cohomology $\Cohom^*(G,\RR):= \oplus_{n\geq 0} \Cohom^n(G,\RR)$ and its reduced counterpart $\underline{\Cohom}^*(G,\RR):= \oplus_{n\geq 0} \underline{\Cohom}^n(G,\RR)$ become graded, unital $\RR$-algebras with respect to the cup product; see Sections \ref{subsec:cohomological-properties} and \ref{sec:cup} for further details on this. Our generalisation of \cite[Theorem 5.1]{sauer} can now be stated as follows:

\begin{theoremletter}\label{mainthm:ring-iso}
If $G$ and $H$ are uniformly measure equivalent, locally compact, second countable, unimodular groups {satisfying property $H_T^n$ for all $n\in \NN$} then the associated reduced cohomology algebras $\underline{\Cohom}^*(G,\RR)$ and $\underline{\Cohom}^*(H,\RR)$ are isomorphic as graded, unital $\RR$-algebras.
\end{theoremletter}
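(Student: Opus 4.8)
The plan is to deduce Theorem \ref{mainthm:ring-iso} from the induction isomorphism of Theorem \ref{mainthm:UME-induction} in two stages: first establishing a topological vector space isomorphism $\underline{\Cohom}^n(G,\RR)\cong\underline{\Cohom}^n(H,\RR)$ in each degree $n$, and then upgrading this to an isomorphism of graded algebras by checking compatibility with the cup product. Throughout I fix a uniform measure equivalence coupling $(\Omega,\eta,X,\mu,Y,\nu,i,j)$ and take $\E=\RR$ with trivial action, so that Theorem \ref{mainthm:UME-induction} supplies isomorphisms $\Cohom^n(G,L^2(X))\cong\Cohom^n(H,L^2(Y))$ of topological vector spaces for every $n\geq 0$.

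For the first stage, the role of property $H_T^n$ is to let one trade the $L^2$-coefficients coming from the coupling for trivial $\RR$-coefficients. Since the $G$-action on $(X,\mu)$ is measure preserving, $L^2(X)$ is a unitary $G$-module and decomposes $G$-equivariantly as $L^2(X)=\RR\bbb\oplus L^2_0(X)$, where $\RR\bbb$ denotes the constants and $L^2_0(X)$ its orthogonal complement; by ergodicity of the coupling the invariant vectors of $L^2(X)$ are exactly the constants, so $L^2_0(X)$ has no non-trivial fixed points. Property $H_T^n$ then forces $\underline{\Cohom}^n(G,L^2_0(X))=\{0\}$, whence the inclusion of constants induces $\underline{\Cohom}^n(G,\RR)\cong\underline{\Cohom}^n(G,L^2(X))$, and similarly for $H$. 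Passing to the maximal Hausdorff quotient in Theorem \ref{mainthm:UME-induction}, a functorial operation that hence carries the topological isomorphism to one of reduced cohomologies, I obtain
\[
\underline{\Cohom}^n(G,\RR)\cong\underline{\Cohom}^n(G,L^2(X))\cong\underline{\Cohom}^n(H,L^2(Y))\cong\underline{\Cohom}^n(H,\RR)
\]
as topological vector spaces, for all $n$.

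The substantial point is the second stage: showing that the resulting degreewise isomorphism respects the cup product. My plan is to verify this at the level of cochains. The cup product is induced by an Alexander--Whitney diagonal approximation on the homogeneous (bar) resolution together with the coefficient multiplication, so it suffices to arrange both identifications above to be realised by cochain maps commuting with these diagonals. For the coefficient reduction this is harmless even though $L^2(X)$ is not closed under pointwise multiplication: the isomorphism $\underline{\Cohom}^*(G,\RR)\cong\underline{\Cohom}^*(G,L^2(X))$ is implemented by the inclusion $\RR\hookrightarrow L^2(X)$ of constants, and a cup product of classes represented by constant-valued cochains only ever multiplies constants, so the products remain in $\RR\subseteq L^2(X)$ and the identification is multiplicative onto its image. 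For the induction isomorphism one must check that the explicit coupling-level map underlying Theorem \ref{mainthm:UME-induction}---built from integration over a fundamental domain, i.e.\ a Fubini-type averaging on $\Omega$---intertwines the diagonal approximations for $G$ and for $H$; granting this, the composite carries the cup product on $\underline{\Cohom}^*(G,\RR)$ to that on $\underline{\Cohom}^*(H,\RR)$.

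I expect the main obstacle to be precisely this cochain-level multiplicativity of the induction map. The difficulty is twofold: one must produce a diagonal-compatible (lax monoidal) refinement of the abstract isomorphism of Theorem \ref{mainthm:UME-induction}, which is only stated as an isomorphism of topological vector spaces and must therefore be re-examined at the level of the complexes appearing in its proof; and one must do so while respecting the analytic constraints of the $L^2$-coefficients, where products of cochains are not a priori defined. The natural way around the latter is to carry out all multiplicative bookkeeping using the bounded (constant) representatives furnished by property $H_T^n$ before passing to $L^2$, so that every product that appears is a product of genuinely constant functions; the former requires tracing the diagonal approximation through the Morita-type identification of the $G$- and $H$-complexes provided by the coupling.
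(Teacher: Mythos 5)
Your first stage is exactly the paper's opening move: replace the measures by ergodic ones, use ergodicity plus property $H_T^n$ to trade $\RR$-coefficients for $L^2$-coefficients, then apply Theorem \ref{mainthm:UME-induction} and functoriality of maximal Hausdorff quotients to get degreewise isomorphisms $\underline{\Cohom}^n(G,\RR)\simeq\underline{\Cohom}^n(H,\RR)$. (Minor point: ergodicity of the coupling is not a hypothesis; the paper first invokes \cite[Proposition 2.13]{KKR17} to arrange it.) The genuine gap is your second stage, which is the actual content of the theorem and which you explicitly leave open. Chasing the isomorphisms through the explicit chain map of Remark \ref{rem:chi-map}, the composite is realised on cochains by
\[
\kappa^n(\xi)(h_0,\dots, h_n)=\int_{Y} \xi\left(\omega_G(h_0^{-1},y)^{-1},\dots, \omega_G(h_n^{-1},y)^{-1}\right)\d \nu(y),
\]
i.e.\ pullback along the ME-cocycle followed by integration over $Y$. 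Integration over $Y$ does not commute with pointwise products, so the cochain-level intertwining of diagonals that your plan asks to ``grant'' fails for this map: $\kappa^n$ is not multiplicative on cochains, and multiplicativity can at best hold after passing to reduced cohomology, which is precisely what requires proof. Note also that the intermediate coefficient modules ($L^2(X)$, $L^2_{\loc}(\Omega)^H$, $L^2(Y)$) are not rings, so there is no cup product on the intermediate cohomologies for your maps to be compatible with. Finally, your proposed remedy misattributes the source of boundedness: property $H_T^n$ does not furnish constant (or even bounded) cocycle representatives --- it only identifies classes modulo the \emph{closure} of coboundaries --- whereas the bounded representatives in the paper come from uniformity of the coupling, i.e.\ local boundedness of $\omega_G$.

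What is needed to close the gap, and what the paper actually does, is the following. Factor the pullback $I_0$ (before integrating) through the subcomplex $D^*\subset L^2_{\loc}(H^{*+1},L^2(Y))$ of $L^\infty(Y)$-valued, locally essentially bounded cochains of Definition \ref{def:D^n-complex}; it lands there precisely because $\omega_G$ is locally bounded, and $I_0$ \emph{is} multiplicative into $\Cohom^*(D)$ since products of essentially bounded functions make sense pointwise. Then prove that the cup product makes $\underline{\Cohom}^*(H,L^2(Y))$ a bimodule over $\Cohom^*(D)$ (Lemma \ref{lem:cohomology-bimodule-structure}); the substance here is a continuity estimate in the $L^2_{\loc}$-variable so that the products descend past closures of coboundaries. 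Finally, split $L^2(Y)=L^2_0(Y)\oplus\RR$ and correspondingly $D^*=D^*_0\oplus D^*_{\RR}$: ergodicity and $H_T^n$ force classes coming from $D_0$ to die in $\underline{\Cohom}^*(H,L^2(Y))$, the kernel of the resulting map $\Cohom^*(D)\to\underline{\Cohom}^*(H,\RR)$ is a two-sided ideal by the bimodule property, and on the surviving $D_{\RR}$-components (cochains essentially constant on $Y$) integration over the probability space $Y$ is manifestly multiplicative. Without this machinery --- the auxiliary complex $D$, the bimodule/continuity lemma, and the ideal argument --- your degreewise isomorphism has not been shown to respect cup products.
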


As already mentioned, for compactly generated amenable groups, quasi-isometry and uniform measure equivalence coincide, and
when the $n$-th Betti number $\beta^n(G):=\dim_{\RR} \Cohom^n(G,\RR)$ is finite, then  $\Cohom^n(G,\RR)$ is automatically Hausdorff  \cite[III, Prop.~3.1]{guichardet-book}, so that we indeed recover Sauer's result  \cite[Theorem 5.1]{sauer}.  
We remark that the class of connected, simply connected (csc) nilpotent Lie groups  (as well as the  class of finitely generated, nilpotent groups) satisfies the assumptions in Theorem \ref{mainthm:ring-iso} and that these furthermore have finite Betti numbers in all degrees; see Section \ref{subsec:cohomological-properties} for details on this. We therefore, in particular, also obtain the following.

\begin{introcorollary}\label{mainthm:cor}
If $G$ and $H$ are quasi-isometric, connected, simply connected,  nilpotent Lie groups then the cohomology algebras $\Cohom^*(G,\RR)$ and $\Cohom^*(H,\RR)$ are isomorphic as graded unital $\RR$-algebras.
\end{introcorollary}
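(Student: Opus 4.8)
The plan is to derive Corollary~\ref{mainthm:cor} directly from Theorem~\ref{mainthm:ring-iso} by checking that the hypotheses of that theorem are met for connected, simply connected nilpotent Lie groups, and by upgrading the reduced-cohomology isomorphism to an isomorphism of the ordinary cohomology algebras. First I would record that any csc nilpotent Lie group $G$ is unimodular (being nilpotent, hence amenable, and in fact having bi-invariant Haar measure), locally compact and second countable, so the standing hypotheses of Theorem~\ref{mainthm:ring-iso} are satisfied. Next, by the result cited in the introduction, for compactly generated amenable groups quasi-isometry and uniform measure equivalence coincide; since csc nilpotent Lie groups are compactly generated and amenable, the assumed quasi-isometry between $G$ and $H$ yields a uniform measure equivalence coupling, which is exactly the input Theorem~\ref{mainthm:ring-iso} requires.

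The second ingredient is to verify property $H_T^n$ for all $n$. Here I would invoke the computation of the continuous cohomology of csc nilpotent Lie groups: by the van~Est isomorphism, $\Cohom^*(G,\RR)$ is computed by the Chevalley--Eilenberg complex of the Lie algebra $\mfg$, so each $\Cohom^n(G,\RR)$ is finite-dimensional and thus automatically Hausdorff. More generally, the vanishing of reduced first cohomology with coefficients in unitary modules without invariant vectors --- and its higher-degree analogues --- is known to hold for this class (these groups have Shalom's property $H_T$ in all degrees); I would cite the relevant statement from Section~\ref{subsec:cohomological-properties} of the paper, where the authors assert precisely that csc nilpotent Lie groups satisfy the assumptions of Theorem~\ref{mainthm:ring-iso} and have finite Betti numbers in every degree.

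With both hypotheses in hand, Theorem~\ref{mainthm:ring-iso} delivers an isomorphism of graded unital $\RR$-algebras $\underline{\Cohom}^*(G,\RR)\simeq\underline{\Cohom}^*(H,\RR)$. The final step is to remove the reduction bar: because all Betti numbers $\beta^n(G)=\dim_\RR\Cohom^n(G,\RR)$ are finite, each $\Cohom^n(G,\RR)$ is Hausdorff by \cite[III, Prop.~3.1]{guichardet-book}, whence the canonical map $\Cohom^n(G,\RR)\to\underline{\Cohom}^n(G,\RR)$ onto the maximal Hausdorff quotient is an isomorphism. The same holds for $H$, and these identifications are compatible with the cup product, so the reduced isomorphism transports to an isomorphism $\Cohom^*(G,\RR)\simeq\Cohom^*(H,\RR)$ of graded unital $\RR$-algebras, as claimed.

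The step I expect to require the most care is the passage from reduced to unreduced cohomology as algebras: one must confirm not only that the degreewise maps $\Cohom^n\to\underline{\Cohom}^n$ are isomorphisms of topological vector spaces under the finite-Betti-number hypothesis, but that the total quotient map $\Cohom^*(G,\RR)\to\underline{\Cohom}^*(G,\RR)$ is a homomorphism of graded algebras for the cup product --- i.e.\ that the cup product descends compatibly to the maximal Hausdorff quotient. This is where the explicit development of the cup product in Sections~\ref{subsec:cohomological-properties} and~\ref{sec:cup} is essential, and it is the only point where Corollary~\ref{mainthm:cor} is not a completely formal consequence of Theorem~\ref{mainthm:ring-iso}.
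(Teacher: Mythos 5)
Your proposal is correct and follows essentially the same route as the paper: invoke the equivalence of quasi-isometry and uniform measure equivalence for compactly generated amenable unimodular groups, verify property $H_T^n$ and finiteness of Betti numbers for csc nilpotent Lie groups (via Blanc/disintegration and van Est, as in Section \ref{subsec:cohomological-properties}), apply Theorem \ref{mainthm:ring-iso}, and use Hausdorffness of $\Cohom^n(G,\RR)$ to identify reduced with ordinary cohomology. Your final worry about the cup product descending to the maximal Hausdorff quotient is already settled by the paper's construction in Section \ref{sec:cup}, where the product on $\underline{\Cohom}^*(G,\RR)$ is \emph{defined} as the descent of the one on $\Cohom^*(G,\RR)$ (the closure of the coboundaries being an ideal), so the quotient map is an algebra homomorphism by construction and an isomorphism in the Hausdorff case.
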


For csc nilpotent Lie groups containing lattices, i.e.~those admitting a rational structure \cite{malcev, malcev-russion}, the isomorphisms in Theorem \ref{mainthm:ring-iso}  and Corollary \ref{mainthm:cor} were already known to experts in the field, as they can be be deduced from  \cite[Theorem 5.1]{sauer} via  \cite[Theorem 1]{katsumi}. In this way Theorem \ref{mainthm:ring-iso} and Corollary \ref{mainthm:cor} provide a more natural approach, covering also  csc nilpotent groups without lattices, and, as a special case, it also gives an isomorphism of graded $\RR$-algebras $\underline{\Cohom}^*(\Gamma,\RR)\simeq \underline{\Cohom}^*(G,\RR)$ for any locally compact second countable, unimodular group $G$ with a uniform lattice $\Gamma\leq G$; see \cite[Proposition 6.11]{KKR18}.\\

As an application of our results, we show in Section \ref{sec:application} how the results above can be used to  improve on the quasi-isometry classification programme for csc nilpotent Lie groups of dimension 7.
In particular, we show that for the seven 1-parameter families of 7-dimensional nilpotent Lie algebras (cf.~\cite{gong}), for which the corresponding csc nilpotent Lie groups are not distinguished by Pansu's theorem \cite[Th{\'e}or{\`e}me 3]{pansu},  it holds for five of the families that all (but at most finitely many) members have pairwise isomorphic cohomology algebras, but that the cohomology algebras of the two remaining families fall in uncountably many isomorphism classes, thus providing new examples of uncountable families of connected simply connected nilpotent Lie groups which are pairwise  non-quasi-isometric.

\subsubsection*{Standing assumptions}
Unless otherwise specified, all generic vector spaces will be over the reals. So if $(X,\mu)$ is a measure space, then $L^2(X)$ will denote the Hilbert space of real valued square integrable functions, and similarly for other function spaces.  We remark that this convention is primarily chosen to streamline notation, and that Theorem  \ref{mainthm:UME-induction}, Theorem \ref{mainthm:ring-iso} and Corollary \ref{mainthm:cor} hold verbatim over the complex numbers as well; see Remarks \ref{rem:cmplx-rem-1} and \ref{rem:cmplx-rem-2}.

\subsubsection*{Acknowledgements}
The authors gratefully acknowledge the financial support from  the Villum Foundation (grant no.~7423) and from the Independent Research Fund Denmark (grant no.~7014-00145B and 9040-00107B). They also thank Henrik D.~Petersen whose earlier joint work with D.K.~formed the basis for part of the present paper, Yves de Cornulier for suggesting the applications regarding  quasi-isometry classification of 7-dimensional csc nilpotent Lie groups, and Nicky Cordua Mattsson for his assistance with programming issues in Maple. Finally, the authors would also like to thank the two anonymous referees for many helpful comments and suggestions; in particular we are grateful for the valuable input regarding the applications to nilpotent Lie groups, which has  improved the  quality of  Section \ref{sec:application} significantly.

\section{Preliminaries}\label{sec:preliminaries}
\subsection{Group cohomology}
In this section we recall the basics on   cohomology theory for locally compact groups, from the point of view of relative homological algebra; the reader is referred to \cite{guichardet-book} for more details and proofs of the statements below. In what follows, $G$ denotes a locally compact second countable group.

\begin{defi}
A (continuous) $G$-module is a Hausdorff topological vector space $\E$ endowed with an action of $G$ by linear maps such that the action map $G\times \E\to \E$ is continuous.  
If $\E$ and $\F$ are $G$-modules, then a linear, continuous, $G$-equivariant map $\varphi\colon \E \to \F$ is called a  \emph{morphism of $G$-modules} and $\varphi$ is said to be strengthened if there exists a linear, continuous map $\eta\colon \F \to \E $ such that $\varphi\circ \eta \circ \varphi =\varphi$.   
\end{defi}
Note  that the map $\eta$ in the definition of a strengthened morphism is not required to be $G$-equivariant. The definition of a strengthened morphism given above is easily seen to be equivalent to the more standard formulation given, for instance, in \cite[Definition D.1]{guichardet-book}.

\begin{defi}
A $G$-module $\mathcal{I}$ is said to be \emph{relatively injective} if for any strengthened, one-to-one homomorphism $\iota\colon \E\to \F$ of $G$-modules $\E$ and $\F$ and any morphism $\varphi\colon \E \to \I$ there exists a morphism $\tilde{\varphi}\colon \F\to \I $ making the following diagram commutative \

\[
\xymatrix{
0\ar[r]  & \E  \ar[r]^{\iota}\ar[d]_{\varphi}  &  \F \ar[dl]^{\tilde{\varphi}} \\
           & \I &  
           }
\]

\end{defi} 

\begin{defi}
Let $\E$ be a $G$-module. A \emph{strengthened, relatively injective resolution} of $\E$  is an exact complex
\[
0\To \E \overset{\iota}{\To} {\I}^0 \overset{d^{0}}{\To} {\I}^1\overset{d^{1}}{\To} {\I}^2 \overset{d^{2}}{\To} \cdots
\]
such that each $\I^n$ is a relatively injective $G$-module and $\iota$ and $d^n$ are strengthened morphisms. 
\end{defi}

If $(\I^n,d^n)$ is  a strengthened,  relatively injective  resolution of a $G$-module $\E$, 
then the \emph{cohomology}, $\Cohom^n(G,E)$, and \emph{reduced cohomology}, $\underline{\Cohom}^n(G,E)$, of $G$ with coefficients in $\E$ is defined as that of the complex 
\[
0\To (\I^0)^{G} \overset{d^{0}\restriction}{\To} ({\I}^1)^G\overset{d^{1}\restriction}{\To} ({\I}^2)^G \overset{d^{2}\restriction}{\To} \cdots
\]
obtained by taking $G$-invariants; i.e.~
\[
\Cohom^n(G,\E)=\frac{\ker(d^n\restriction)}{\im(d^{n-1}\restriction)} \ \text{ and } \ \underline{\Cohom}^n(G,E)=\frac{\ker(d^n\restriction)}{\cl(\im(d^{n-1}\restriction))},
\]
where $\cl( - )$ denotes closure in the topological vector space $\I^n$. We will denote by $\Cohom^*(G,\E)$ and $\underline{\Cohom}^*(G,\E)$ the direct sums $\oplus_{n\geq 0} \Cohom^n(G,\E)$ and $\oplus_{n\geq 0} \underline{\Cohom}^n(G,\E)$, respectively.
 A standard argument \cite[Corollaire 3.1.1]{guichardet-book} shows that $\Cohom^n(G,\E)$ is independent of the choice of strengthened, relative injective resolution up to isomorphism of (not necessarily Hausdorff!) topological vector spaces and hence also $ \underline{\Cohom}^n(G,E)\simeq  {\Cohom}^n(G,E)/\cl(\{0\})$ is independent of the choice of resolution. For the cohomology to be well defined one of course needs to know that strengthened, relatively injective resolutions exist, but in \cite[III, Proposition 1.2]{guichardet-book} it is shown that the complex
\[
0\To \E \overset{\iota}{\To} C(G,\E)\overset{d^{0}}{\To} C(G^2,\E)\overset{d^{1}}{\To} C(G^3,\E) \overset{d^{2}}{\To} \cdots
\]
with  
\begin{align}\label{eq:standard-differential}
d^n(f)(g_0,\dots, g_{n+1})=\sum_{i=0}^{n+1}(-1)^i f(g_0,\dots, \hat{g}_{i},\dots, g_{n+1})
\end{align}
constitutes such a resolution. Here the circumflex denotes omission and $C(G^{n},\E)$ denotes the space of continuous functions from $G^n$ to $\E$ endowed with $G$-action given by
\begin{align}\label{eq:standard-action}
(g.f)(g_0,\dots, g_n ):=g(f(g^{-1}g_0,\dots, g^{-1}g_n)).
\end{align}
When $\E$ is a Fr{\'e}chet space and $X$ is a $\sigma$-compact, locally compact, topological space whose Borel $\sigma$-algebra is endowed with a measure $\mu$, then one defines, for  $p\in \NN$, $\L^p_{\loc}(X,\E)$ as those measurable functions $f\colon X\to \E$ such that
\[
\int_{K} q(f(x))^p \d \mu(x)<\infty,
\]
for every compact set $K\subset X$ and every continuous seminorm $q$ on $\E$, and $L^p_{\loc}(X,\E)$ is then defined by identifying functions in $\L^p_{\loc}(X,\E)$ that are equal $\mu$-almost everywhere. This too is a Fr{\'e}chet space when endowed with the topology defined by the family of seminorms
\[
Q_{K,q}(f):=\sqrt[\leftroot{-2}\uproot{2}p]{\int_{K} q(f(x))^p \d \mu(x)},
\]	
where $K$ runs through the compact subsets of $X$ and $q$ runs through the continuous seminorms on $\E$. 
As lcsc groups are $\sigma$-compact this construction applies to $G$ and its powers and it was shown in \cite[Corollaire 3.5]{blanc} that the complex
\begin{align}\label{eq:L^p-loc-resolution}
0\To \E \overset{\iota}{\To} L^p_{\loc}(G,\E)\overset{d^{0}}{\To}  L^p_{\loc}(G^2,\E)\overset{d^{1}}{\To}  L^p_{\loc}(G^3,\E) \overset{d^{2}}{\To} \cdots,
\end{align}
with $G$-action and coboundary maps given by the natural analogues of \eqref{eq:standard-action} and \eqref{eq:standard-differential}, is also a strengthened, relatively injective resolution.
\subsubsection{Property $H_T$}\label{subsec:cohomological-properties}

Recall that a lcsc group $G$ is said to have Shalom's \emph{property $H_T$} (see \cite{shalom-harmonic-analysis})   if for any unitary, complex Hilbert $G$-module $\H$ (i.e.~$\H$ is a complex Hilbert space endowed with the structure of a continuous $G$-module  and each element in $G$ acts as a unitary operator) the inclusion $\H^G\hookrightarrow \H$ induces an isomorphism $\underline{\Cohom}^1(G,\H^G)\simeq \underline{\Cohom}^1(G,\H)$ or, equivalently, if $\underline{\Cohom}^1(G,\H)=\{0\}$ for every unitary $G$-module $\H$ with no non-trivial fixed-points.  In this spirit, we will say that a group has property $H_T^n$ if $\underline{\Cohom}^n(G,\H)=\{0\}$ for every (complex) unitary $G$-module $\H$ with no non-trivial fixed-points. Since our main focus in the present paper is on groups acting on \emph{real} Hilbert spaces, we remark at this point that  a group has property $H_T^n$  if and only if it has the analogously defined property for real Hilbert spaces; this is easily seen by applying restriction of scalars and complexification, respectively. 

Note that it follows by general disintegration theory and \cite[Theorem 10.1]{blanc} that csc nilpotent Lie groups have property {$H_T^n$ for all $n$, and from this and Mal'cev theory \cite{malcev} it also follows that torsion free, finitely generated nilpotent groups have property $H_T^n$ for all $n$}; see \cite[Theorem 4.1.3]{shalom-harmonic-analysis} for more details. \\ 

\subsubsection{Betti numbers}
The \emph{Betti numbers} of a group $G$ are  defined as $\beta^n(G):=\dim_{\RR}\Cohom^n(G,\RR)$ and we remark that these are finite whenever $G$ is a csc nilpotent Lie group  or a torsion free, discrete nilpotent group. For the latter, note that the classifying space of such groups are finite CW-complexes and the group cohomology agrees with the cohomology of the classifying space, and the former can be seen for instance by passing to Lie algebra cohomology via the van Est theorem \cite{van-est}.
Moreover, when $\beta^n(G)<\infty$, then  $\Cohom^n(G,\RR)$  is automatically Hausdorff \cite[III, Proposition 2.4]{guichardet-book} and hence there is no difference between reduced and ordinary cohomology.

\subsubsection{Cup products}\label{sec:cup}
For a discrete group $\Gamma$, it is well-known that its  cohomology with real coefficients $\Cohom^*(\Gamma, \RR)=\oplus_{n\geq 0} \Cohom^n(\Gamma, \RR)$ becomes a unital, graded-commutative $\RR$-algebra for the so-called cup product, and the same construction also works for lcsc groups,  but since this does not seem properly documented in the existing literature, we recall the construction below. If $G$ is  lcsc groups and  $\xi\in C(G^{n+1}, \RR)$ and $\eta\in C(G^{m+1},\RR)$, one defines their \emph{cup product} $\xi\smile  \eta\in C(G^{n+m+1},\RR)$ as
\[
(\xi\smile \eta)(g_0,\dots, g_{n+m}):=\xi(g_0, \dots, g_n)\eta(g_n,\dots, g_{n+m}).
\]
As the cup product commutes with the (left regular) $G$-action, in the sense that $(g.\xi)\smile (g.\eta)=g.(\xi\smile \eta)$, it descends to a map 
\[
\smile\colon C\left(G^{n+1},\RR\right)^G\times C\left(G^{m+1}, \RR\right)^G\To C\left(G^{n+m+1},\RR\right)^{G}.
\]
Moreover,  the standard differentials \eqref{eq:standard-differential} satisfy a graded Leibniz rule, i.e.~
\begin{align}\label{eq:Leibniz}
d^{n+m}(\xi\smile \eta)=d^n(\xi ) \smile \eta + (-1)^n\xi\smile d^m(\eta),
\end{align}
from which it follows that the cup product passes down to the level of cocycles and that the set of coboundaries is an ideal, so that the cup product descends to a map
\[
\smile\colon \Cohom^n(G,\RR) \times \Cohom^m(G,\RR) \To \Cohom^{n+m}(G,\RR),
\]
which  turns $\Cohom^*(G,\RR)$ into a unital, graded-commutative $\RR$-algebra. 
We will encounter more elaborate cup products in Section \ref{sec:proof-of-B} satisfying natural analogues of \eqref{eq:Leibniz}, so  to make the argument easily available we now give the short proof of  \eqref{eq:Leibniz}, in order to  leave the details later to the reader in good conscience. To prove \eqref{eq:Leibniz}, let cocycles $\xi\in C(G^{n+1}, \RR)$, $\eta\in C(G^{m+1},\RR)$ be given and note that
\begin{align*}
&\scalebox{0.92}[1]{$d^{n+m}(\xi\smile \eta)(g_0\dots g_{n+m+1}) =\displaystyle\sum_{i=0}^{n+m+1}(-1)^{i}(\xi\smile \eta)(g_0\dots \hat{g}_i\dots g_{n+m+1})=$}\\
&\scalebox{0.92}[1]{$=\displaystyle \sum_{i=0}^{n}(-1)^{i}\xi(g_0\dots \hat{g}_i\dots g_{n+1})\eta(g_{n+1}\dots g_{n+m+1}) 
+ \sum_{i=n+1}^{n+m+1}(-1)^{i}\xi(g_0 \dots g_{n})\eta(g_{n} \dots \hat{g}_i\dots g_{n+m+1})$}\\
&\scalebox{0.92}[1]{$=\displaystyle \sum_{i=0}^{n}(-1)^{i}\xi(g_0\dots \hat{g}_i\dots g_{n+1})\eta(g_{n+1}\dots g_{n+m+1}) 
+ \sum_{i=1}^{m+1}(-1)^{i+n}\xi(g_0 \dots g_{n})\eta(g_{n} \dots \hat{g}_{n+i}\dots g_{n+m+1})$}
\end{align*}
On the other hand
\begin{align*}
&\scalebox{0.91}[1]{$\left(d^n(\xi ) \smile \eta + (-1)^n\xi\smile d^m(\eta)\right)(g_0\dots g_{n+m+1})$=}\\
& \scalebox{0.91}[1]{$=\displaystyle\sum_{i=0}^{n+1}(-1)^i\xi(g_0 \dots \hat{g}_i\dots g_{n+1})\eta(g_{n+1}\dots g_{n+m+1}) +(-1)^n\sum_{i=0}^{m+1}(-1)^{i}\xi(g_0\dots g_n)\eta(g_{n}\dots \hat{g}_{n+i}\dots g_{n+m+1})$}\\
&\scalebox{0.91}[1]{$=\displaystyle\sum_{i=0}^{n}(-1)^i\xi(g_0\dots \hat{g}_i\dots g_{n+1})\eta(g_{n+1}\dots g_{n+m+1}) 
+(-1)^n\sum_{i=1}^{m+1}(-1)^{i}\xi(g_0\dots g_n)\eta(g_{n}\dots \hat{g}_{n+i}\dots g_{n+m+1})$},
\end{align*}
where the last equality follows since the last summand in the first sum cancels with the first summand in the second; this proves \eqref{eq:Leibniz}.\\

A direct computation shows that the cup product is continuous with respect to the topology of uniform convergence on compacts at the level of cochains, and since we just saw that the coboundaries constitute an ideal, the same is true for the their closure. Thus, the cup product also descends to a product on $\underline{\Cohom}^*(G,\RR)$.

\subsection{Measure equivalence}
In this section we review the necessary theory concerning (uniform) measure equivalence for locally compact groups; we refer to \cite{BFS-integrable} and \cite{KKR17} for more details on the involved notions.

\begin{defi}[{\cite{BFS-integrable}}]\label{def:me}
  Two  unimodular lcsc groups $G$ and $H$ with Haar measures $\lambda_G$ and $\lambda_H$ are said to be \emph{measure equivalent} if there exist a standard Borel measure $G\times H$-space $(\Omega,\eta)$  and two standard Borel measure spaces $(X,\mu)$ and $(Y,\nu)$ such that: 
  \begin{itemize}
  \item[(i)] both $\mu$ and $\nu$ are finite measures and $\eta$ is non-zero;
  \item[(ii)] there exists an isomorphism of measure $G$-spaces  $i\colon (G\times Y, \lambda_G\times \nu) \To(\Omega,\eta)$, where $\Omega$ is considered a measure $G$-space for the restricted action and $G\times Y$ is considered a measure $G$-space for the action $g.(g',y)=(gg',y)$;

  \item[(iii)] there exists an isomorphism of measure $H$-spaces $j:(H\times X, \lambda_H\times \mu) \to (\Omega,\eta)$, where $\Omega$ is considered a measure $H$-space for the restricted action and $H\times X$ is considered a measure $H$-space for the action $h.(h',x)=(hh',x)$.
  \end{itemize}
  A standard Borel space $(\Omega,\eta)$ with these properties is called a \emph{measure equivalence coupling} between $G$ and $H$, and whenever needed we will specify the additional data by writing  $(\Omega,\eta, X,\mu, Y,\nu, i,j)$. Note also that since the Haar measure is unique up to scaling, the existence of a measure equivalence coupling is independent of the choice of Haar measures on the two groups in question.
\end{defi}
Any measure equivalence coupling gives rise to measure preserving actions $G\curvearrowright (X,\mu)$ and $H\curvearrowright (Y,\nu)$ as well as two $1$-cocycles $\omega_G\colon H\times Y \to G$ and $\omega_H\colon G\times X\to H$. These are defined almost everywhere by the relations
\begin{align*}
i(g\omega_G(h,y)^{-1}, h.y)&=h.i(g,y), \quad \text{for almost all $g\in G$}.\\
j(h\omega_H(g,x)^{-1}, g.x)&=g.j(h,x), \quad \text{for almost all $h\in H$}.
\end{align*}
In the definition of the actions and cocycles above, we are paying little attention to the measure theoretical subtleties, but the reader may find these worked out in detail in \cite[Section 2]{KKR17}. Note that it was was also proven in \cite{KKR17} that one can always obtain a \emph{strict} measure equivalence coupling; i.e.~one in which the maps $i$ and $j$ are Borel isomorphisms and globally equivariant.

\begin{defi}[{\cite{KKR17}}]\label{def:ume}
A strict measure equivalence coupling $(\Omega,\eta, X,\mu, Y,\nu,i,j)$ between uni\-modular, lcsc groups $G$ and $H$ is said to be \emph{uniform} if
\begin{itemize}
\item[(i)] {f}or every compact $C\subset G$ there exists a compact $D\subset H$ such that $j^{-1}\circ i(C\times Y)\subset D\times X${;} 
\item[(ii)] {f}or every compact $D\subset H$ there exists a compact $C\subset G$ such that $i^{-1}\circ j(D\times X)\subset C\times Y$.
\end{itemize}
In this case $G$ and $H$ are said to be \emph{uniformly measure equivalent} (UME), and the properties (i) and (ii) are referred to as the cocycles being \emph{locally bounded}.

\end{defi}
As mentioned already, measure equivalence was originally introduced by Gromov as a measure theoretic analogue of quasi-isometry, and although neither property in general implies the other, it was proven in \cite[Proposition 6.13]{KKR17} that for compactly generated\footnote{Recall that for compactly generated groups coarse equivalence coincides with quasi-isometry}, unimodular, lcsc groups, uniform measure equivalence always implies quasi-isometry, and that  the converse holds under the additional assumption of amenability \cite[Theorem 6.15]{KKR17}. This generalises earlier results for discrete groups by Shalom \cite{shalom-harmonic-analysis} and Sauer \cite{sauer}. \\

If $(\Omega, \eta, X,\mu,Y,\nu, i,j)$ is a strict UME coupling between $G$ and $H$ and $\E$ is a Fr{\'e}chet space then we define $L^p_{\loc}(\Omega,\E)$ as those (equivalence classes modulo equality $\eta$-almost everywhere) of measurable functions $f\colon \Omega\to \E$ such that for every $C\subset G$ compact and every continuous seminorm $q$ on $\E$, one has
\[
\int_{C\times Y} q(f\circ i(g,y))^2 \d\lambda_G(g)\d \nu(y)<\infty,
\]
We topologise $L^2_{\loc}(\Omega, \E)$ via the seminorms $q_C(f):= \sqrt{\int_{C\times Y} q(f\circ i(g,y))^2 \d\lambda_G(g)\d \nu(y)}$ and endow it with the $G\times H$- action
\begin{align}\label{eq:action-on-L2-loc-Omega}
((g,h).f)(t):= (g,h). f((g,h)^{-1}.t), \ t\in \Omega, (g,h)\in G\times H.
\end{align}
We could of course also have defined $L^p_{\loc}(\Omega,\E)$ using the map $j$ instead of $i$, but since $\Omega$ is uniform this gives rise to exactly the same space. To see this, take $f$, $C$ and $q$ as above. By local boundedness of the cocycles, we can find $D\subset H$ compact, such that $j^{-1}\circ i(C\times Y)\subset D\times X$ and consequently
\begin{align*}
\int_{C\times Y} q(f\circ i(g,y))^2 \d\lambda_G(g)\d \nu(y)&=\int_{C\times Y} q(f\circ j\circ j^{-1}\circ i(g,y))^2 \d\lambda_G(g)\d \nu(y)
\\
&=\int_{j^{-1}\circ i(C\times Y)} q(f\circ j(h,x))^2 \d\lambda_H(h)\d \mu(x)
\\
&\leq\int_{D\times Y} q(f\circ j(h,x))^2 \d\lambda_H(h)\d \mu(x).
\end{align*}
This shows that each of the seminorms defining the topology via the isomorphism $i$, is dominated by one of the seminorms defined via $j$, and by symmetry the converse is true as well, and the two families therefore generate the same topology on $L^2_{\loc}(\Omega,\E)$. 
\begin{lem}
With the action defined by \eqref{eq:action-on-L2-loc-Omega} the space $L^2_{\loc}(\Omega, \E)$ becomes a Fr{\'e}chet $G\times H$-module. 
\end{lem}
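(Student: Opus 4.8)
The plan is to verify the three defining properties of a Fréchet $G\times H$-module in turn: that $L^2_{\loc}(\Omega,\E)$ is a Fréchet space, that the formula \eqref{eq:action-on-L2-loc-Omega} genuinely defines an action by continuous linear maps, and that the global action map $(G\times H)\times L^2_{\loc}(\Omega,\E)\to L^2_{\loc}(\Omega,\E)$ is jointly continuous. The Fréchet property is essentially inherited from the construction: $L^2_{\loc}(\Omega,\E)$ is built exactly as $L^2_{\loc}(G\times Y,\E)$ via the isomorphism $i$, which was already observed above to be a Fréchet space when $\E$ is Fréchet. One should check that the countable family of seminorms $q_C$ can be reduced to a countable subfamily (exhausting $G$ by an increasing sequence of compacts $C_k$, using $\sigma$-compactness, and taking a cofinal sequence of seminorms $q$ on the Fréchet space $\E$) and that the space is complete and Hausdorff in the resulting metrizable topology.

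First I would check that \eqref{eq:action-on-L2-loc-Omega} is well-defined and linear; this is immediate since the action of $G\times H$ on $\E$ is linear and precomposition with the measurable bijection $(g,h)^{-1}.(\cdot)$ preserves measurability and equivalence classes modulo $\eta$-null sets. The substantive point is that each fixed $(g,h)$ acts as a \emph{continuous} operator, i.e.\ that each seminorm $q_C((g,h).f)$ is dominated by finitely many seminorms of $f$. Here I would unravel the definition: $q_C((g,h).f)^2=\int_{C\times Y}q\big((g,h).f((g,h)^{-1}.i(g',y))\big)^2\,\d\lambda_G(g')\d\nu(y)$. Translating by $g^{-1}$ in the $G$-variable and using the cocycle/local-boundedness bookkeeping of Definition \ref{def:ume}, the integration region $(g,h)^{-1}.i(C\times Y)$ is contained in $i(C'\times Y)$ for some compact $C'\subset G$ (depending on $C$, $g$, and $h$), and the continuity of the $\E$-action on the compact set $\{(g,h)\}$ lets one dominate $q\big((g,h).v\big)$ by a fixed continuous seminorm $q'$ on $\E$. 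Thus $q_C((g,h).f)\leq q'_{C'}(f)$, giving continuity of each operator. The unimodularity of $G$ (so that the left translation preserves $\lambda_G$) is what keeps the integral controlled.

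The hard part, as usual for topological group modules, will be the \emph{joint} continuity of the global action map, rather than separate continuity. Here I would argue in two stages. Fixing $f\in L^2_{\loc}(\Omega,\E)$, one first shows the orbit map $(g,h)\mapsto (g,h).f$ is continuous; this is the familiar continuity-of-translation statement for $L^2_{\loc}$-type spaces and is proved by approximating $f$ in each seminorm $q_C$ by a compactly supported continuous function (density of $C_c(\Omega,\E)$), for which joint continuity of translation is clear from uniform continuity on compacts, and then controlling the error uniformly for $(g,h)$ ranging over a fixed compact neighbourhood using the seminorm estimate from the previous paragraph. Combining the uniform operator bound over a compact set of group elements with the continuity of the orbit maps then upgrades separate continuity to joint continuity by the standard $\varepsilon/2$ splitting
\[
(g,h).f-(g_0,h_0).f_0=(g,h).(f-f_0)+\big((g,h).f_0-(g_0,h_0).f_0\big).
\]
I expect the genuine obstacle to be the measure-theoretic bookkeeping needed to convert the integral over the translated region $(g,h)^{-1}.i(C\times Y)$ back into a seminorm of $f$ with a \emph{single} compact $C'$ that can be chosen uniformly as $(g,h)$ varies over a compact set; this is precisely where local boundedness of the cocycles (Definition \ref{def:ume}) and the Borel-isomorphism form of the strict coupling are used, and the remaining verifications are routine once that uniform domination is in hand.
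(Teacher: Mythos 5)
Your equicontinuity estimate (each seminorm $q_C((g,h).f)$ dominated by some $q'_{C'}(f)$ with $C'$ uniform as $(g,h)$ ranges over a compact set, via local boundedness of the cocycles and invariance of the measures) is correct, and so is the final $\varepsilon/2$ upgrade from orbit continuity plus equicontinuity to joint continuity; incidentally, what requires unimodularity there is the \emph{right} translation by cocycle values, not the left translation by $g^{-1}$. The genuine gap is in the orbit-continuity step. You propose to prove continuity of $(g,h)\mapsto (g,h).f$ by approximating $f$ by elements of $C_c(\Omega,\E)$ and invoking ``uniform continuity on compacts'' for translation. In the coordinates $\Omega\simeq G\times Y$ furnished by $i$, this works for the $G$-direction, where $G$ acts by honest left translation in the $G$-variable; but the $H$-direction is implemented by
\[
\bigl((e,h).f\bigr)(i(g',y)) \;=\; h.f\bigl(i(g'\omega_G(h^{-1},y)^{-1},\, h^{-1}.y)\bigr),
\]
and the cocycle $\omega_G$ and the action $H\curvearrowright Y$ are only \emph{measurable}, never continuous: $Y$ carries no canonical topology, and the auxiliary compact metrizable topology one puts on it (to make sense of $C_c$ and of the Fr\'echet structure) makes the Borel structure standard but does not make the $H$-action or the cocycle continuous. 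So even for $f_0\in C_c(G\times Y,\E)$, the argument of $f_0$ does not vary continuously with $h$, and no uniform-continuity argument yields $q_C\bigl((e,h).f_0-(e,h_0).f_0\bigr)\to 0$. Continuity of ``translation'' by merely measure-preserving transformations is a genuinely different (Koopman-representation type) statement and needs its own proof; you flag the measure-theoretic bookkeeping as the obstacle but locate it in the wrong place — the uniform choice of $C'$ is fine, the orbit continuity in the $H$-direction is not.

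The paper circumvents exactly this point. It first reduces to showing that $G$ and $H$ act continuously \emph{separately} (composing the two jointly continuous actions recovers the $G\times H$-action), and then uses symmetry: for the $G$-action one works in the $i$-coordinates, where $G$ acts by pure translation and no cocycle appears, and Guichardet's isomorphism $L^2_{\loc}(\Omega,\E)\simeq L^2_{\loc}(G,L^2(Y,\E))$ of $G$-modules reduces everything to continuity of the \emph{pointwise} $G$-action on $L^2(Y,\E)$; that continuity is then proved precisely by your scheme — equicontinuity over compacts plus orbit continuity on the dense subspace $C(Y,\E)$, where compactness of $\xi(Y)\subset\E$ makes the approximation work. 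For the $H$-action one runs the same argument in the $j$-coordinates $\Omega\simeq H\times X$, where $H$ acts by translation; this switch is legitimate because the $L^2_{\loc}$-topology was shown, just before the lemma and using local boundedness, to be the same whether defined via $i$ or via $j$. If you reorganize your proof along these lines — each group in its own adapted coordinates, so that its action is translation and the cocycle disappears — your argument goes through; as written, the $H$-orbit continuity step fails.
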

\begin{proof}
Since $Y$ is standard Borel we may equip it with a compact metrizable topology generating the $\sigma$-algebra \cite[Theorem 15.6]{kechris-book}, and the space $L^2_{\loc}(\Omega,\E)$ then directly identifies with $L^2_{\loc}(G\times Y, \E)$ which is Fr{\'e}chet by \cite[D.2]{guichardet-book}.  It suffices to show that both groups act continuously, and by symmetry it is enough to treat the $G$-action. To this end, note that \cite[D.2.2 (vii)]{guichardet-book} gives an isomorphism of $G$-modules
\[
L^2_\loc(\Omega, \E)\simeq L^2_{\loc}(G, L^2(Y,\E)),
\]
where the action on the right hand side is given by $(g.\xi)(g')(y)=g.(\xi(g^{-1}g')(y))$, so by \cite[D.3.2]{guichardet-book} it suffices to show that the pointwise $G$-action on $L^2(Y,\E)$ is continuous. To see this, note that $C(Y,\E)$ is dense in $L^2(Y,\E)$, so by \cite[Lemme D.8 (ii)]{guichardet-book}, it is enough to show that the action is equicontinuous over compact sets   and pointwise continuous on elements from $C(Y,\E)$. To see the former, let $K\subset G$ be compact and $q$ be a continuous seminorm on $\E$. Then since $G\curvearrowright \E$ is continuous, there exists a continuous seminorm $q'$ on $\E$ such that for all $g\in K$ and $x\in \E$ we have $q(g.x)\leq q'(x)$. Hence, for $g\in K$ and $\xi\in L^2(Y, \E)$  we also have  that
\[
\int_Y q((g.\xi)(y))^2\d \nu(y)= \int_Y q(g.\xi(y))^2\d \nu(y) \leq \int_Y q'(\xi(y))^2\d \nu(y)
\]
showing equicontinuity over compact sets. To see that the action is pointwise continuous on $\xi\in C(Y,\E)$, simply note that if $g_n\to g$ in $G$  and $q$ is a continuous seminorm on $\E$ then 
\begin{align*}
\int_Y(q(g_n.\xi-g\xi)(y))^2 \d \nu(y)& = \int_Y(q(g_n.\xi(y)-g.\xi(y))^2 \d \nu(y)\\
&\leq \nu(Y) \sup_{x\in \xi(Y)} q(g_n.x-g.x)^2 \underset{n\to\infty}{\To} 0,
\end{align*}
where the convergence follows from compactness of $\xi(Y)\subset \E$ and \cite[Lemme D.8 (iii)]{guichardet-book}.
\end{proof}

\section{Proof of Theorem \ref{mainthm:UME-induction}}
The aim of the current section is to prove Theorem  \ref{mainthm:UME-induction}, so we fix uniformly measure equivalent, unimodular lcsc groups $G$ and $H$  and a strict, uniform measure equivalence coupling $(\Omega, \eta,X\mu, Y,\nu, i,j)$ between them, as well as Fr{\'e}chet $G\times H$-module $\E$. Consider the space $L^2_{\loc}(G^n, L^2_{\loc}(\Omega,\E))$ endowed with  $G\times H$-action 
\[
((g,h).\xi)(g_1,\dots, g_n)(t):=(g,h)[\xi(g^{-1}g_1,\dots, g^{-1}g_n)((g,h)^{-1}.t)], 
\]
for $(g,h)\in G\times H, g_1,\dots, g_n\in G, t\in \Omega.$ 

\begin{lem}

The space $L^2_{\loc}(G^n, L^2_{\loc}(\Omega,\E))$  is a relatively injective $G\times H$-module and the complex $R_G$ defined as
\begin{align}\label{eq:R_G-complex}
0\To L^2_{\loc}(\Omega, \E) \To L^2_{\loc}\left(G, L^2_{\loc}(\Omega, \E)\right) \To L^2_{\loc}\left(G^2, L^2_{\loc}(\Omega, \E)\right) \To \cdots
\end{align}
constitutes a strengthened, relatively injective resolution of the $G\times H$-module  $L^2_{\loc}(\Omega, \E)$ when endowed with the standard homogeneous differentials, given by the obvious {extension} of the formula \eqref{eq:standard-differential}. 
\end{lem}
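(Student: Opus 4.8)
The plan is to split the statement into two logically independent assertions --- that $R_G$ is an exact, strengthened, $G\times H$-equivariant complex augmented over $W:=L^2_{\loc}(\Omega,\E)$, and that each term $L^2_{\loc}(G^n,W)$ is relatively injective as a $G\times H$-module --- and to dispatch the first quickly, reserving the real effort for the second, which is where the passage from $G$ to the product group genuinely interferes.

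For the resolution part I would regard $W$ as a Fr\'echet $G$-module by forgetting the $H$-action and feed it into Blanc's resolution \eqref{eq:L^p-loc-resolution}. This immediately yields exactness of the underlying complex together with continuous linear (non-equivariant) splittings of $\iota$ and of the $d^n$, i.e.\ the strengthening; both are statements about the underlying Fr\'echet spaces and are untouched by the extra $H$-action. It then only remains to check that $\iota$ and the differentials are $G\times H$-equivariant rather than merely $G$-equivariant, and this is transparent from \eqref{eq:standard-action}--\eqref{eq:standard-differential}: the $H$-action affects only the coefficient variable $t\in\Omega$ and commutes both with the omission/reindexing of the arguments $g_0,\dots,g_{n+1}$ and with the constant augmentation.

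The relative injectivity is the hard part, and the obstruction is exactly the $H$-direction. In the extension problem for a strengthened $G\times H$-injection $\A\hookrightarrow\B$ the splitting $\sigma\colon\B\to\A$ is only continuous and linear, so the usual $G$-module extension formula (built from $\sigma$ together with an evaluation-type map) produces something that is $G$-equivariant but \emph{not} $H$-equivariant; hence one cannot upgrade Blanc's $G$-injectivity by hand. Instead I would exploit the coupling structurally. First untwist the $G$-action: the assignment $\Phi(f)(g_1,\dots,g_n):=g_1^{-1}.f(g_1,\dots,g_n)$ is an isomorphism of $G\times H$-modules from $L^2_{\loc}(G^n,W)$ onto the same space equipped with the modified action in which $G$ merely left-translates the arguments and $H$ acts only on the coefficient; that $\Phi$ respects the $H$-action uses crucially that the $G$- and $H$-actions on $W$ commute. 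Writing $G^n\cong G\times G^{n-1}$ so that the diagonal translation becomes translation in the first variable, I would then identify this module with the coinduced module $\Coind_H^{G\times H}\!\big(L^2_{\loc}(G^{n-1},W)\big)$, where $L^2_{\loc}(G^{n-1},W)$ carries only the $H$-action inherited from $W$.

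Since restriction of the group action is exact and sends strengthened injections to strengthened injections, its right adjoint $\Coind_H^{G\times H}$ carries relatively injective $H$-modules to relatively injective $G\times H$-modules, so it suffices to prove that $L^2_{\loc}(G^{n-1},W)$ is relatively $H$-injective. This is precisely where the coupling enters: via the Borel isomorphism $j$ and uniformity --- the same computation that earlier established the independence of $L^2_{\loc}(\Omega,\E)$ from the choice of $i$ or $j$ --- one obtains an isomorphism of $H$-modules $W\cong L^2_{\loc}(H,L^2(X,\E))$, and a Fubini-type identification upgrades this to
\[
L^2_{\loc}(G^{n-1},W)\;\cong\;L^2_{\loc}\!\left(H,\,L^2_{\loc}(G^{n-1},L^2(X,\E))\right).
\]
The right-hand side is the degree-one term of Blanc's resolution \eqref{eq:L^p-loc-resolution} for the group $H$ with Fr\'echet coefficients $L^2_{\loc}(G^{n-1},L^2(X,\E))$, hence relatively $H$-injective, which closes the argument (the case $n=1$ reducing to the bare statement that $W$ itself is relatively $H$-injective). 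The single crucial input, and the main obstacle, is thus this relative $H$-injectivity of the coefficient module $L^2_{\loc}(\Omega,\E)$ coming from $\Omega\cong H\times X$; without it the $G\times H$-injectivity would genuinely fail, as one already sees by specialising to $G$ trivial.
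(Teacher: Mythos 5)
Your reduction of the ``strengthened resolution'' half to Blanc's resolution \eqref{eq:L^p-loc-resolution} for $G$ alone, plus a direct check of $G\times H$-equivariance, matches the paper, and your untwisting map $\Phi$ and the identification $L^2_{\loc}(\Omega,\E)\simeq L^2_{\loc}\bigl(H,L^2(X,\E)\bigr)$ via $j$ and uniformity are sound (the latter is exactly where the paper also brings in the coupling). The gap is in the one step that carries all the weight: the claim that the $L^2_{\loc}$-model of coinduction, $L^2_{\loc}\bigl(G,L^2_{\loc}(G^{n-1},W)\bigr)$ with $G$ translating the variable and $H$ acting on values, inherits relative injectivity from the $H$-module $L^2_{\loc}(G^{n-1},W)$ ``because coinduction is right adjoint to restriction.'' That adjunction does not exist in the $L^2_{\loc}$ category: the correspondence $\Hom_{G\times H}(\A,\Coind V)\simeq\Hom_H(\A,V)$ is implemented by evaluation at the identity, $T\mapsto \ev_e\circ T$, which is undefined on $L^2_{\loc}$-classes. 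The module that genuinely is right adjoint to restriction is the continuous-function model $C(G,V)$, which is a different $G\times H$-module and cannot be substituted for the $L^2_{\loc}$-term of $R_G$. So the formal ``right adjoints preserve relative injectives'' argument proves nothing about your module, and the transfer statement you need is not an off-the-shelf fact; since you yourself identify relative injectivity as the hard part, this is a genuine gap at the crux of the proof.

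The statement is nevertheless true, and the repair is to push your identifications one step further instead of stopping at a coinduced module: after your untwisting, a Fubini-type identification gives
\[
L^2_{\loc}\bigl(G^{n},W\bigr)\;\simeq\;L^2_{\loc}\bigl(G\times H,\;L^2_{\loc}(G^{n-1}\times X,\E)\bigr)
\]
with the \emph{standard} Blanc action of the product group (translation in both variables combined with the coefficient action), and then Blanc's Th{\'e}or{\`e}me 3.4 applied to the single group $G\times H$ yields relative injectivity in one stroke. This is precisely the paper's proof, except that the paper skips the preliminary untwisting and instead writes down an explicit isomorphism $\alpha$ onto $L^2_{\loc}\bigl(G\times H, L^2_{\loc}(G^{n}\times X,\E)\bigr)$ whose $G\times H$-equivariance is verified via the cocycle identity for $\omega_H$; in your version the cocycle is hidden inside $\Phi$. (Alternatively, your coinduction principle can be proved correctly by exhibiting $L^2_{\loc}(G,V)$ as an equivariant direct summand of $L^2_{\loc}(G\times H,V)$, but that argument again invokes Blanc for the product group, so nothing is gained over the direct route.)
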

\begin{proof}
Throughout the proof we identify $\Omega$ with $H\times X$ through the map $j$ defining the $L^2_{\loc}$-structure on $\Omega$; recall that under this identification the $G\times H$-action on 
\[
L^2_{\loc}\left(G^{n+1}, L^2_{\loc}(\Omega,\E)\right)=L^2_{\loc}\left(G^{n+1}, L^2_{\loc}(H\times X,\E)\right)
\]
 is given by the formula
\[
((g,h).\xi)(g_0,\dots, g_n)(h',x)=(g,h).\left[\xi\left(g^{-1}g_0,\dots, g^{-1}g_n \right)(h^{-1}h'\omega_H(g^{-1},x  )^{-1}, g^{-1}.x)\right].
\]
Since $X$ is standard Borel, \cite[Theorem 15.6]{kechris-book} ensures that we can find a compact, metrizable topology on it whose open sets generate the Borel structure, and we may therefore consider the Fr{\'e}chet space $L^2_{\loc}(G^n \times X,\E)$. 
Arguing as in \cite[n$^{\circ}$ D.3.2.]{guichardet-book}, we obtain that this is a Fr{\'e}chet $G\times H$-module when endowed with the $G\times H$-action
\[
((g,h).\xi)(g_1,\dots, g_n,x) :=(g,h).\xi(g^{-1}g_1, \dots, g^{-1}g_n, g^{-1}.x),
\]
and by \cite[Th{\'e}or{\`e}me 3.4]{blanc}, we therefore have that
\[
L^2_{\loc}\left (G\times H, L^2_{\loc}(G^n \times X,\E)\right),
\]
with the standard $G\times H$-action, is a relatively injective Fr{\'e}chet $G\times H$-module. A routine calculation now shows that the map
\begin{align*}
\alpha\colon L^2_{\loc}\left(G^{n+1}, L^2_{\loc}(H\times X,\E)\right) &\To L^2_{\loc}(G\times H, L^2_{\loc}(G^n \times X,\E))\\
\alpha(\xi)(g,h)(g_1,\dots, g_n,x)&:=\xi(g_1,\dots, g_n,g)(h\omega_H(g^{-1},x),x),
\end{align*}
is an isomorphism of Fr{\'e}chet spaces intertwining the $G\times H$-actions, and hence it follows that also $L^2_{\loc}\left(G^{n+1}, L^2_{\loc}(\Omega,\E)\right)$ is  relatively injective; we will here only show equivariance of $\alpha$, leaving the verification of  invertibility and continuity to the reader: Acting with $(\bar{g}, \bar{h})\in G\times H$ before and after applying $\alpha$ gives
\begin{align*}
\alpha((\bar{g},\bar{h}).\xi)(g,h)(g_1,\dots,g_{n-1},x) = ((\bar{g},\bar{h}).\xi)(g_1,\dots, g_n,g)(h\omega_H(g^{-1},x),x)
\\
=(\bar{g},\bar{h}).\left[ \xi(\bar{g}^{-1}g_1,\dots , \bar{g}^{-1}g_{n-1},\bar{g}^{-1}g)(\bar{h}^{-1}h\omega_H(g^{-1},x)\omega_H(\bar{g}^{-1},x)^{-1},\bar{g}^{-1}x)\right],
\end{align*}
and
\begin{align*}
((\bar{g},\bar{h}).\alpha(\xi))(g,h)(g_1,\dots,g_{n-1},x)=(\bar{g},\bar{h}).\left[\alpha(\xi)(\bar{g}^{-1}g,\bar{h}^{-1}h)(\bar{g}^{-1}g_1,\dots,\bar{g}^{-1}g_{n-1},\bar{g}^{-1}x)\right]
\\
=(\bar{g},\bar{h})\left[\xi(\bar{g}^{-1}g_1,\dots,\bar{g}^{-1}g_{n-1},\bar{g}^{-1}g)(\bar{h}^{-1}h\omega_H(g^{-1}\bar{g},\bar{g}^{-1}x),\bar{g}^{-1}x)\right],
\end{align*}
respectively. The result now follows, since the cocycle identity implies
$$\omega_H\left(g^{-1}\bar{g},\bar{g}^{-1}x\right)\omega_H\left(\bar{g}^{-1},x\right)=\omega_H\left(g^{-1},x\right).$$ 
Since the complex $R_G$ is simply the standard $L^2_{\loc}$-resolution of $L^2_{\loc}(\Omega, \E)$ considered only as a $G$-module, it is clearly strengthened, and hence the proof is complete. 
\end{proof}

\begin{proof}[Proof of Theorem \ref{mainthm:UME-induction}]
Since the roles of $G$ and $H$ are symmetric in the module structure on $L^2_{\loc}(\Omega,\E)$ one may construct a strengthened, relatively injective resolution $R_H$ of $L^2_{\loc}(\Omega,\E)$ analogous to \eqref{eq:R_G-complex}, whose degree $n$ term is given by $L^2_{\loc}(H^{n+1},L^2_{\loc}(\Omega,\E))$. Thus, both $R_G$ and $R_H$ compute $\Cohom^n(G\times H,L^2_{\loc}(\Omega, \E))$ after passing to $G\times H$-invariants and cohomology. However,
\[
L^2_{\loc}\left(G^{n+1},L^2_{\loc}(\Omega,\E)\right)^{G\times H}=\left(L^2_{\loc}(G^{n+1},L^2_{\loc}(\Omega,\E))^H \right)^G=\left(L^2_{\loc}\left(G^{n+1},L^2_{\loc}(\Omega,\E)^H\right) \right)^G,
\]
where the last equality is due to the fact that $H$ acts trivially in the $G^{n+1}$-direction. From this we see that passing to $G\times H$-invariants in $R_G$ is exactly the same as passing to $G$-invariants in the $L^2_{\loc}$-resolution \eqref{eq:L^p-loc-resolution} of the $G$-module $L^2_{\loc}(\Omega,\E)^{H}$, and hence we obtain a topological isomorphism 
\[
\Cohom^n\left(G, L^2_{\loc}(\Omega, \E)^H\right) \simeq\Cohom^n(G\times H, L^2_{\loc}(\Omega,\E)).
\]
Replacing $R_G$ with $R_H$, a symmetric argument yields  that
\[
\Cohom^n\left(H, L^2_{\loc}(\Omega, \E)^G\right) \simeq\Cohom^n(G\times H, L^2_{\loc}(\Omega,\E)),
\]
and the desired isomorphism follows. \\
If $\E=\RR$ with trivial $G\times H$-action, then by \cite[D.2.2 (vii) \& Lemme D.9]{guichardet-book}  we have an isomorphism of $H$-modules
\[
L^2_{\loc}(\Omega, \RR)^G \simeq L^2_{\loc}(G\times Y,\RR)^G \simeq L^2_{\loc}(G,L^2(Y))^G \simeq L^2(Y),
\]
 and, similarly, an isomorphism of $G$-modules $L^2_{\loc}(\Omega, \E)^H\simeq L^2(X)$;  hence the last part of the statement follows from the first.
\end{proof}

\begin{rem}\label{rem:cmplx-rem-1}
Theorem \ref{mainthm:UME-induction} is stated for real Fr{\'e}chet spaces and real cohomology, but as seen from the proof just given, the analogous statement over the complex numbers also holds true with verbatim the same proof.
\end{rem}

\begin{rem}\label{rem:chi-map}
The proof just given does not, directly, provide a concrete map realizing the isomorphism $\Cohom^n\left(G, L^2_{\loc}(\Omega, \E)^H\right)\simeq \Cohom^n\left(H, L^2_{\loc}(\Omega, \E)^G\right)$, but for  applications, e.g.~our Theorem \ref{mainthm:ring-iso}, having a concrete map is very useful, and we shall therefore now describe one such a map.
By general relative homological algebra \cite[III, Corollaire 1.1]{guichardet-book} , any morphism $\chi$ of complexes of $G\times H$-modules from $R_G$ to $R_H$ which lifts the identity on $L^2_{\loc}(\Omega,\E)$ will induce a (topological) isomorphism $\Cohom^*\big(R_G^{G\times H}, d^n\restriction\big) \simeq \Cohom^*\big(R_H^{G\times H}, d^n\restriction\big)$.  We now define such a map 
\begin{align*}
\chi^n\colon L^2_{\loc}\left(G^{n+1}, L^2_{\loc}(\Omega, \E)\right) &\To L^2_{\loc}\left(H^{n+1}, L^2_{\loc}(\Omega, \E)\right)
\end{align*}
by setting 
\[
\chi^n(\xi)(h_0,\dots, h_n)(t):= \xi\left(\pi_G\circ i^{-1}(h_0^{-1}.t), \dots, \pi_G\circ i^{-1}(h_n^{-1}.t   )\right)(t),
\]
where $\pi_G\colon G\times Y \to G$ denotes the projection onto the first factor. Identifying $\Omega$ with $G\times Y$ via $i$ the map takes the form
\[
\chi^n(\xi)(h_0,\dots, h_n)(g,y)=\xi\left(g\omega_G(h_0^{-1},y)^{-1}, \dots, g\omega_G(h_n^{-1},y)^{-1}\right)(g,y).
\]

Note that the map $\chi^n$ does indeed take values in  $L^2_{\loc}(H^{n+1}, L^2_{\loc}(\Omega, \E))$ since $\Omega$ is uniform such that the cocycles are locally bounded, and from this it also follows that $\chi^n$ is continuous. It is straightforward to see that $\chi$ is a chain map lifting the identity on $L^2_{\loc}(\Omega, \E)$ and from the cocycle identity it follows that $\chi^n$ is $G\times H$-equivariant.
\end{rem}

\section{Proof of Theorem \ref{mainthm:ring-iso}}\label{sec:proof-of-B}
In this section we prove Theorem \ref{mainthm:ring-iso}. We thus
assume that $G$ and $H$ are uniformly measure equivalent, unimodular, lcsc groups satisfying property {$H_T^n$ for all $n$}, and fix a strict, uniform measure equivalence coupling $(\Omega, \eta, X,\mu, Y,\nu, i,j)$. By \cite[Proposition 2.13]{KKR17}, and its proof, one may change the measures $\eta,\mu$ and $\nu$ into ones that are ergodic for the $G\times H$-, $G$- and $H$-actions, respectively, so we may, and shall, assume that the original measures are ergodic. Furthermore, by rescaling the measures involved, we may also assume that $\nu$ is a probability measure. We therefore obtain isomorphisms
\begin{align*}
\underline{\Cohom}^n(G,\RR)  &=  \underline{\Cohom}^n\left(G, L^2(X)^G\right) \tag{ergodicity}\\
&\simeq   \underline{\Cohom}^n\left(G, L^2(X)\right) \tag{property {$H_T^n$}}\\
&\simeq   \underline{\Cohom}^n\left(G, L_{\loc}^2(\Omega)^H\right) \\
&\simeq \underline{\Cohom}^n\left(H, L_{\loc}^2(\Omega)^G\right) \tag{Theorem \ref{mainthm:UME-induction}}\\
&\simeq \underline{\Cohom}^n\left(H, L^2(Y)\right)\\
&\simeq \underline{\Cohom}^n\left(H, \RR\right) \tag{ergodicity and property {$H_T^n$}}
\end{align*}
Chasing through the isomorphisms above, using the explicit isomorphism $\chi^n$ from Remark \ref{rem:chi-map}, shows that
\begin{align*}
\kappa^n\colon L^2_{\loc}(G^{n+1}, \RR) &\To L^2_{\loc}(H^{n+1},\RR),
\end{align*}
given by $\kappa^n(\xi)(h_0,\dots, h_n)=\int_{Y} \xi\left(\omega_G(h_0^{-1},y)^{-1},\dots, \omega_G(h_n^{-1},y)^{-1}  \right)\d \nu(y)$ at the cochain level induces the isomorphism $\underline{\Cohom}^n(G,\RR)\simeq \underline{\Cohom}^n(H,\RR)$ and the aim is now to prove that this isomorphism preserves cup products. To this end, we need an auxiliary complex defined as follows:
\begin{defi}\label{def:D^n-complex}
Denote by $D^n$  the subspace of $ L^2_{\loc}(H^{n+1}, L^2(Y)) $ consisting of the classes (modulo equality almost everywhere) of functions $\xi\in \mathcal{L}^2_{\loc}(H^{n+1}, L^2(Y))$ such that:
\begin{enumerate}
\item[(i)] for almost all $(h_0,\dots, h_n)\in H^{n+1}$: $\xi(h_0,\dots, h_n)\in L^\infty(Y)\subset L^2(Y)$, and
\item[(ii)]  for all $C\subset H^{n+1}$ compact:  $\text{ess sup} \{\| \xi(h_0,\dots, h_n)\|_{\infty} : (h_0,\dots, h_n)\in C \} <\infty. $
\end{enumerate}
\end{defi}
Since $L^\infty(Y)\subset L^2(Y)$ is an $H$-invariant subspace, $D^n$ becomes a (non-complete) $H$-submodule of $L^2_{\loc}(H^{n+1},L^2(Y))$. Moreover, it is easily seen that $d^n(D^n)\subseteq D^{n+1}$  and hence 
\[
D: \ (D^0)^H \overset{d^0\restriction}{\To} (D^1)^H \overset{d^1\restriction}{\To} (D^2)^H \overset{d^2\restriction}{\To} \cdots
\]
is a subcomplex of the standard $L^2_\loc$-complex computing $\Cohom^*(H,L^2(Y))$.
\begin{defi}

For $\alpha \in D^n, \beta\in D^m$ and $\xi \in L^2_{\loc}(H^{n+1},L^2(Y))$ we define $\alpha\smile \xi, \xi \smile \alpha \in L^2_{\loc}(H^{m+n+1},L^2(Y))$ and $\alpha\smile \beta \in D^{n+m}$ by
\begin{align}
\alpha\smile \xi(h_0,\dots, h_{n+m})&:=\alpha(h_0,\dots, h_n)\xi(h_n,\dots, h_{n+m}) \ \label{eq:cup-left}\\
\xi \smile \alpha(h_0,\dots, h_{n+m})&:=\xi(h_0,\dots, h_m)\alpha(h_m,\dots, h_{n+m}) \label{eq:cup-right} \\
\alpha\smile \beta(h_0,\dots, h_{n+m})&:=\alpha(h_0,\dots, h_n)\beta(h_n,\dots, h_{n+m})\label{eq:cup-D}
\end{align}
where the products on the right hand side are the pointwise products between functions in $L^\infty(Y)$ and $L^2(Y)$. 
\end{defi}

\begin{lem}\label{lem:cohomology-bimodule-structure}
The product defined by \eqref{eq:cup-D} descends to a product on $\Cohom^*(D)$ turning it into a unital $\RR$-algebra. Similarly, the products \eqref{eq:cup-left} and \eqref{eq:cup-right} descend to the level of cohomology and reduced cohomology turning $\Cohom^*(H,L^2(Y))$ and $\underline{\Cohom}^*(H,L^2(Y))$ into bimodules for $\Cohom^*(D)$.   
\end{lem}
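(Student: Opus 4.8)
The plan is to follow, \emph{mutatis mutandis}, the construction of the cup product in Section~\ref{sec:cup}. Once the three products in \eqref{eq:cup-left}, \eqref{eq:cup-right} and \eqref{eq:cup-D} are shown to be well defined, $H$-equivariant, continuous and to obey a graded Leibniz rule, every assertion of the lemma follows by the formal arguments already used there for the ordinary cup product. I will therefore isolate these four ingredients and defer the remaining book-keeping to the analogy with \eqref{eq:Leibniz}.

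First I would verify well-definedness, which is where the main technical effort lies. For $\alpha\in D^n$ and $\xi\in L^2_{\loc}(H^{m+1},L^2(Y))$ the pointwise product $\alpha(h_0,\dots,h_n)\xi(h_n,\dots,h_{n+m})$ lies in $L^2(Y)$, being an $L^\infty(Y)$-function times an $L^2(Y)$-function, and the pointwise bound
\[
\|\alpha(h_0,\dots,h_n)\xi(h_n,\dots,h_{n+m})\|_{L^2(Y)}^2\leq \|\alpha(h_0,\dots,h_n)\|_{\infty}^2\,\|\xi(h_n,\dots,h_{n+m})\|_{L^2(Y)}^2,
\]
combined with condition (ii) of Definition~\ref{def:D^n-complex} (which bounds the first factor uniformly over the relevant compact set) and the local square-integrability of $\xi$, shows by Tonelli's theorem that $\alpha\smile\xi\in L^2_{\loc}(H^{n+m+1},L^2(Y))$. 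The same estimate dominates the defining seminorms and hence gives continuity of $\xi\mapsto\alpha\smile\xi$; the case of \eqref{eq:cup-right} is symmetric, and for \eqref{eq:cup-D} one uses in addition that $L^\infty(Y)$ is closed under multiplication, so that $\alpha\smile\beta$ inherits conditions (i) and (ii) and indeed lands in $D^{n+m}$.

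Next comes equivariance, which lets the products restrict to $H$-invariants. The decisive point is that $H$ acts on $L^2(Y)$ through the measure-preserving transformations $y\mapsto h^{-1}.y$, so that the action is \emph{multiplicative}, $h.(fg)=(h.f)(h.g)$ for $f\in L^\infty(Y)$ and $g\in L^2(Y)$; arguing exactly as for the identity $(g.\xi)\smile(g.\eta)=g.(\xi\smile\eta)$ of Section~\ref{sec:cup} this yields $(h.\alpha)\smile(h.\xi)=h.(\alpha\smile\xi)$ together with its two analogues, so the products pass to the invariant subcomplex $D$ and to $\big(L^2_{\loc}(H^{\bullet+1},L^2(Y))\big)^H$. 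The graded Leibniz rules, e.g.
\[
d^{n+m}(\alpha\smile\xi)=d^n(\alpha)\smile\xi+(-1)^n\,\alpha\smile d^m(\xi),
\]
and their counterparts for \eqref{eq:cup-right} and \eqref{eq:cup-D}, are then established by the identical telescoping cancellation that proved \eqref{eq:Leibniz}, the only change being that scalar products are replaced by pointwise products of $L^\infty$- and $L^2$-functions. From these rules one reads off that products of cocycles are cocycles, that the coboundaries form a two-sided ideal for \eqref{eq:cup-D} and a sub-bimodule for \eqref{eq:cup-left}--\eqref{eq:cup-right}, and hence that all three products descend to ordinary cohomology; the unit of $\Cohom^*(D)$ is the class of the constant cocycle $\bbb\in(D^0)^H$ taking the value $1\in L^\infty(Y)$.

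The only step that is not purely formal, and the one I expect to be the crux, is the descent to \emph{reduced} cohomology. Here the Leibniz rule alone is insufficient: to see that the products pass to the quotients by $\cl(\im d^{\bullet})$ one combines the Leibniz identity---by which a cocycle cupped with a coboundary is again a coboundary---with the continuity established in the second step, so that a cocycle cupped with an element of $\cl(\im d^{\bullet})$ again lies in $\cl(\im d^{\bullet})$; this is precisely the reasoning used at the end of Section~\ref{sec:cup}. Finally, associativity and the compatibility $(\alpha\smile\xi)\smile\beta=\alpha\smile(\xi\smile\beta)$ that makes $\Cohom^*(H,L^2(Y))$ and $\underline{\Cohom}^*(H,L^2(Y))$ into $\Cohom^*(D)$-bimodules are immediate pointwise identities requiring no further estimates.
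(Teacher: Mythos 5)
Your proposal is correct and takes essentially the same route as the paper's proof: multiplicativity of the $H$-action to restrict the products to invariants, the Leibniz rule (proved as for \eqref{eq:Leibniz}) to descend to ordinary cohomology, and the essential-sup bound from Definition \ref{def:D^n-complex}(ii) giving continuity of $\xi\mapsto\alpha\smile\xi$, which is precisely the ingredient the paper combines with the Leibniz rule for the descent to reduced cohomology. (Only a trivial slip: the continuity you invoke in the reduced-cohomology step was established in your first step, not your second.)
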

\begin{proof}

For $f_1\in L^\infty(Y)$, $f_2\in L^2(Y)$ and $h\in H$ one has
\[
h.(f_1\cdot f_2)=(h.f_1)\cdot (h.f_2) \ \text{ and } h.(f_2\cdot f_1)=(h.f_2)\cdot (h.f_1) 
\]
(the products being defined pointwise) and from this it follows that the cup product of $H$-invariant elements is again $H$-invariant, so that $\smile$ restricts to the level of fixed points for $H$. Moreover, by repeating the proof of  \eqref{eq:Leibniz} mutatis mutandis, the cup products  \eqref{eq:cup-left}, \eqref{eq:cup-right}  and \eqref{eq:cup-D}, are seen to satisfy the obvious versions of the Leibniz rule, from which it follows that  $\Cohom^*(D)$ becomes a (graded) $\RR$-algebra for which $\Cohom^*(H,L^2(Y))$ is a (graded) bimodule. We will omit the details of the latter, and instead show that  $\underline{\Cohom}^*(H,L^2(Y))$ becomes a $\Cohom^*(D)$-bimodule, since this proof contains the other as a special case. We need to show that if $\alpha, \alpha'\in D^n \cap \ker(d^n)$ and $\xi, \xi'\in L^2_{\loc}(H^{m+1},L^2(Y))\cap \ker(d^m)$ satisfy that $\alpha-\alpha'\in d^{n-1}(D^{n-1})$ and $\xi-\xi'\in \cl\left(d^{m-1}(L^2_{\loc}(H^{m},L^2(Y))\right)$ then $\alpha \smile \xi-\alpha'\smile \xi'\in \cl\left(d^{n+m-1}(L^2_{\loc}(H^{n+m},L^2(Y))\right) $. Write $\alpha-\alpha'=d^{n-1}(\beta)$ and note that, by the Leibniz rule, we have

\begin{align*}
\alpha\smile \xi - \alpha'\smile \xi' &= (\alpha-\alpha')\smile \xi- \alpha'\smile(\xi-\xi')\\
&=d^{n-1}(\beta) \smile \xi- \alpha'\smile(\xi-\xi')\\
&= d^{n+m-1}(\beta \smile \xi)-(-1)^{n-1}\beta \smile d^m(\xi) - \alpha'\smile(\xi-\xi')\\
&= d^{n+m-1}(\beta \smile \xi) - \alpha'\smile(\xi-\xi').
\end{align*}
By assumption, there exist $\eta_k\in L^2_{\loc}(H^{m},L^2(Y))$ such that $\lim_k d^{m-1}(\eta_k)=\xi-\xi'$, so to finish the proof it suffices to show that the cup product is pointwise continuous in the second variable; i.e.~that if $\zeta_k\to_k 0 $ in $L^2_{\loc}(H^{m+1},L^2(Y))$ then $\alpha\smile \zeta_k \to_k 0$ in $L^2_{\loc}(H^{m+n+1},L^2(Y))$. To see this, let a compact $K\subset H^{n+m+1}$  be given. Upon passing to a bigger compact set, we may assume that $K=\prod_{i=0}^{n+m}K_i$ for some compact subsets $K_i\subset H$ and we now get
\begin{align*}
& \int_{K} \|\alpha \smile \zeta_k\|_2 \d \lambda_H^{\tens (n+m+1)}= \int_{\prod_{i=0}^{n+m}K_i} \|\alpha(h_0,\dots, h_n)\zeta_k(h_n,\dots, h_{n+m})\| \d h_0 \cdots \d h_{n+m}\\
&\leq \sup_{(h_0,\dots, h_n)\in \prod_{i=0}^{n-1}K_i} \|\alpha(h_0,\dots, h_n)\|_\infty\prod_{i=0}^n \lambda_H(K_i)  \int_{\prod_{i=n}^{n+m}K_i} \|\zeta_k(h_n,\dots, h_{n+m})\| \d h_n \cdots \d h_{n+m}.
\end{align*}
By definition of $D^n $ we have 
\[
 \sup_{(h_0,\dots, h_n)\in \prod_{i=0}^nK_i} \|\alpha(h_0,\dots, h_n)\|_\infty< \infty,
 \]
  and by assumption 
  \[
   \int_{\prod_{i=n}^{n+m}K_i} \|\zeta_k(h_n,\dots, h_{n+m})\| \d h_n \cdots \d h_{n+m} \to_k 0.  \qedhere
   \]
\end{proof}

\begin{rem}
Using the usual contracting homotopy (cf.~\cite[III, Proposition 1.4]{guichardet-book}) for the resolution $(L^2_{\loc}(H^{n+1},L^2(Y)),d^n)_{n\in \NN_0}$,  one easily checks that $(D^n, d^n)_{n\in \NN_0}$ is a strengthened resolution of the $H$-submodule $L^\infty(Y)\subset L^2(Y)$, but it seems less clear whether the modules $D^n$ are relatively injective or not. If this were the case, then $\Cohom^*(D)$ would of course be nothing but $\Cohom^*(H,L^\infty(Y))$, where $L^\infty(Y)$ is considered as an $H$-module with respect to the $2$-norm.
\end{rem}

With the lemmas above at our disposal, we can now prove Theorem \ref{mainthm:ring-iso} following the strategy in \cite{sauer} almost verbatim. Recall that we have fixed a strict, ergodic UME coupling $(\Omega, \eta, X,\mu,Y,\nu, i,j)$ and normalised the measures so that $\nu$ has total mass 1.

\begin{proof}[{Proof of Theorem \ref{mainthm:ring-iso}}]
Denote by $L^2_0(Y)$ the functions in $L^2(Y)$ that integrate to zero. 
The Hilbert $H$-module $L^2(Y)$ then splits as an (orthogonal) direct sum $L^2(Y)=L^2_0(Y) \oplus \RR$, via the map $\xi \mapsto \left(\xi-(\int_Y\xi\d\nu)1_Y, \int_Y\xi\d \nu \right)$. From this it follows that we get a decomposition of $H$-complexes
\[
\Phi\colon L^2_{\loc}(H^{n+1},L^2(Y)) \simeq L^2_{\loc}(H^{n+1},L^2_0(Y)) \oplus L^2_{\loc}(H^{n+1},\RR),
\]
by simply composing a function $L^2_{\loc}(H^{n+1},L^2(Y))$ with the two projections $p_1\colon L^2(Y) \to L_0^2(Y)$ and $p_{2}\colon L^2(Y)\to \RR$.
Similarly, denote by $D_0^n$ the subcomplex of $D^n$ consisting of the (classes of) those functions that integrate to zero almost everywhere and by $D^n_{\RR}$ the 
(classes of functions) that, almost everywhere, are almost everywhere constant on $Y$, and note that the decomposition $\Phi$ maps $D^n$ onto $D^n_0 \oplus D^n_{\RR}$.
To be extremely precise, $D^n_0$ consists of classes of those $\xi \in \mathcal{L}^2_{\loc}(H^{n+1}, L^2(Y))$ such that $[\xi]\in D^n$ and such that for almost all $h_0,\dots, h_n $ we have $\int_Y\xi(h_0,\dots, h_{n})d\nu=0$ and $D^n_\RR$ are the classes of those functions  $\xi \in \mathcal{L}^2_{\loc}(H^{n+1}, L^2(Y))$ such that $[\xi]\in D^n$ and such that for almost all $h_0,\dots, h_n $ we have $\xi(h_0,\dots, h_n)=(\int_Y\xi(h_0,\dots, h_{n})d\nu)1_Y $ (the latter equation in $L^\infty(Y)\subset L^2(Y)$
We therefore obtain splittings at the level of cohomology

\begin{align*}
 \Cohom^n(D)&\simeq \Cohom^n(D_0) \oplus\Cohom^n(D_{\RR})\\
\Cohom^n(H,L^2(Y))&\simeq \Cohom^n(H,L_0^2(Y)) \oplus \Cohom^n(H,\RR) \\
\underline{\Cohom}^n(H,L^2(Y))&\simeq \underline{\Cohom}^n(H,L_0^2(Y)) \oplus \underline{\Cohom}^n(H,\RR) 
\end{align*}
respecting the natural downward maps induced by the corresponding inclusions at the level of complexes. 
As shown in the first paragraph of the present section, we have a continuous  linear map
\[
I^*\colon \Cohom^*(G,\RR) {\To} {\Cohom}^*(H,L^2(Y)),
\]
which descends to an isomorphism after the passage to reduced cohomology, and maps  the (class of a) continuous $n$-cocycle $\xi\in C(G^{n+1}, \RR)^G\cap \ker(d^n)$ to the (class of) the cocycle   $I(\xi)\in L^2_{\loc}(H^{n+1}, L^2(Y))$ given by
\[
I(\xi)(h_0,\dots, h_n)(y)= \xi\left(\omega_G(h_0^{-1},y)^{-1},\dots, \omega_G(h_n^{-1},y)^{-1}  \right).
\]
Moreover, since $\Omega$ is uniform, the cocycle $\omega_G$ is locally bounded and from this it follows that $I(\xi)\in D^n$, where $D^n$ is the submodule of $L^2_{\loc}(H^{n+1}, L^2(Y))$ described in Definition \ref{def:D^n-complex}. Hence, $I^*$ factorises as
\[
\Cohom^*(G,\RR) \overset{I^*_0}{\To} \Cohom^*(D) \overset{\iota^*}{\To} \Cohom^*(H, L^2(Y)), 
\]
where $\iota$ is  the map induced by the inclusion $D^n \subset L^2_{\loc}(H^{n+1},L^2(Y))$. Denoting by $\underline{\iota}^*$ the map obtained by composing $\iota^*\colon \Cohom^*(D)\To \Cohom^*(H,L^2(Y))$ with the natural projection $ \Cohom^*(H,L^2(Y))\to \underline{\Cohom}^*(H,L^2(Y))$, we obtain a sequence of maps as follows:
\[
\xymatrix{
&&&\\
\Cohom^*(G,\RR) \ar[r]^{I_0^*}& \Cohom^*(D)   \ar@/^2.0pc/[rr]^{p^*}  \ar[r]^<<<<<{\underline{\iota}^*}& \underline{\Cohom}^*(H,L^2(Y)) \ar[r]^{p_2^*}_{\sim} & \underline{\Cohom}^*(H,\RR) 
}
\]

Here $p_2^*$ is the  map induced at the level of reduced cohomology by the map $L^2(Y)\to \RR$ given by integration against $\nu$ and $p^*$ is simply defined as $p_2^*\circ \underline{\iota}^*$. Now, $I_0^*$ is multiplicative, since already at the level of cochains we have $I_0(\xi \smile \eta)=I_0(\xi)\smile I_0(\eta)$ for $\xi\in C(G^{n+1}, \RR)$ and $\eta\in C(G^{m+1}, \RR)$, which is seen by a direct computation. 

We now aim to show that also $p^*=p_2^*\circ \underline{\iota}^*$ is multiplicative. To this end, first notice the map $\underline{\iota}^*$   is a bimodule map with respect to the $\Cohom^*(D)$-bimodule structure on $\Cohom^*(D)$ given by left/right multiplication and the $\Cohom^*(D)$-bimodule structure on $\underline{\Cohom}^n(H,L^2(Y))$ described in Lemma \ref{lem:cohomology-bimodule-structure}, and its kernel is therefore a two-sided ideal. Moreover, since $p_2^*$ is an isomorphism, we have $\ker(\underline{\iota}^*)=\ker(p^*)$, and from this we now obtain that $p^*$ is multiplicative as follows: given  $[\xi]\in \Cohom^n(D)$ and $[\eta]\in \Cohom^m(D)$, they decompose as $[\xi]=[\xi_0] + [\xi_1]$ and  $[\eta]=[\eta_0] + [\eta_1]$ for cocycles $\xi_0\in D^n_{0}, \xi_1\in D^n_{\RR}$, $\eta_0\in D^m_0$ and  $\eta_1\in D^m_{\RR}$. By definition, $\underline{\iota}^*([\xi_0]) =\underline{\iota}^*([\eta_0])=0$ and since $\ker(\underline{\iota}^*)$ is an ideal we obtain 

\[
\underline{\iota}^*([\xi]\smile [\eta])=\underline{\iota}^*\left([\xi_0]\smile [\eta_0] + [\xi_0]\smile [\eta_1] +[\xi_1]\smile [\eta_0] +[\xi_1]\smile [\eta_1]\right)= \underline{\iota}^*([\xi_1]\smile [\eta_1])
\]
Since $\xi_1$ and $\eta_1$ take values in essentially constant functions on $Y$ we obtain
\begin{align*}
p_2^*\circ \underline{\iota}^*  \left([\xi_1]\smile [\eta_1] \right)(h_0,\dots, h_{n+m+1}) &= \int_{Y}( \xi_1\smile \eta_1)(h_0,\dots, h_{n+m+1})\d \nu\\
&= \int_Y\xi_1(h_0,\dots, h_n)\eta_1(h_n,\dots, h_{n+m+1})\d\nu\\
&=  \int_Y\xi_1(h_0,\dots, h_n) \d\nu \int_Y \eta_1(h_n,\dots, h_{n+m+1})\d\nu\\
&=p_2^*\circ \underline{\iota}^* \left([\xi_1])\smile p_2^*\circ \underline{\iota}^* ([\eta_1] \right)(h_0,\dots, h_{n+m+1})
\end{align*}
The composition $p^* \circ I_0^*\colon \Cohom^*(G,\RR)\to \underline{\Cohom}^*(H,\RR)$ is therefore multiplicative and since we already argued, in the beginning of this section, that this map descends to an isomorphism $\underline{\Cohom}^*(G,\RR)\simeq \underline{\Cohom}^*(H,\RR)$ it follows that it too is multiplicative.
\end{proof}

\begin{rem}\label{rem:cmplx-rem-2}
Of course one could also define a cup product on $\Cohom^*(G,\CC)$ and the arguments above obviously generalise to this setting so that the analogue of Theorem \ref{mainthm:ring-iso} over the complex numbers holds true as well.
\end{rem}

\section{An application}\label{sec:application}
It is a well known open problem to classify connected simply connected (csc) nilpotent Lie groups up to quasi-isometry, and conjecturally quasi-isometry actually  coincides with isomorphism on this class of groups \cite{yves-qi-classification}. Although  isomorphism classification is wide open in general, 
csc nilpotent Lie groups of dimension at most 7 are completely classified (up to isomorphism) by means of the corresponding classification of nilpotent real Lie algebras. This classification is the work by many hands, and we will here focus on Gong's thesis \cite{gong} which provides the first complete classification of all 7-dimensional, real, nilpotent Lie algebras ({It should, however, be noted that \cite{De-Graaf} has pointed out that this classification is not complete in the 6-dimensional case)}. Regarding the quasi-isometry problem mentioned above, there are basically two main results supporting the conjecture.\\
Firstly, by Pansu's celebrated paper \cite{pansu}, if $G$ and $H$ are quasi-isometric csc nilpotent Lie groups with Lie algebras $\mathfrak{g}$ and $\mathfrak{h}$, respectively, then their associated Carnot Lie algebras $\mathfrak{Car}(\mathfrak{g})$ and $\mathfrak{Car}(\mathfrak{g})$ are isomorphic. Recall that if $\mathfrak{g}$ is a step $c$ nilpotent Lie algebra with  lower central series $\mathfrak{g}=\mathfrak{g}_{1}\geq \mathfrak{g}_{2}\geq \dots \geq \mathfrak{g}_{c}\geq \{ 0\}$, then its \emph{Carnot algebra} is defined  as
\[
\mathfrak{Car}(\mathfrak{g}):=\bigoplus_{i=1}^{c} \mathfrak{g}_{i}/\mathfrak{g}_{i+1},
\]
with Lie bracket given, for $\bar{\xi}\in \mathfrak{g}_{i}/\mathfrak{g}_{i+1}$ and $\bar{\eta}\in \mathfrak{g}_{j}/\mathfrak{g}_{j+1}$, by $[\bar{\xi}, \bar{\eta}]:=\overline{[\xi,\eta]}\in \mathfrak{g}_{i+j}/\mathfrak{g}_{i+j+1}$. The Lie algebra $\mathfrak{g}$ is said to be \emph{Carnot}, if $\mathfrak{g}\simeq \mathfrak{Car}(\mathfrak{g})$, and from Pansu's theorem it therefore follows that if the csc nilpotent Lie groups associated with two Carnot, nilpotent Lie algebras are quasi-isometric, then they are in fact isomorphic.\\ 
Secondly, if $G$ and $H$ both contain lattices, i.e.~admit rational structures \cite{malcev}, then by  \cite[Theorem 5.1]{sauer} and \cite[Theorem 1]{katsumi}  one may conclude that $\Cohom^*(G,\RR)$ and $\Cohom^*(H,\RR)$ are isomorphic as graded-commutative $\RR$-algebras, and this also prevents many low dimensional csc nilpotent Lie groups from being quasi-isometric; see \cite{yves-qi-classification} for the state of the art in this direction. \\
 Our Corollary \ref{mainthm:cor} generalises the latter result (see Section \ref{subsec:cohomological-properties} for this) and in the following sections we give examples showing how it can be used to distinguish new csc nilpotent Lie groups up to quasi-isometry.  The key here is that  for a csc nilpotent Lie group $G$ with Lie algebra $\mathfrak{g}$, the van Est theorem \cite{van-est} provides an isomorphism of graded-commutative $\RR$-algebras $\Cohom^*(G,\RR) \simeq \Cohom^*(\mathfrak{g}, \RR)$, where the latter denotes the Lie algebra cohomology of $\mathfrak{g}$ (see e.g.~\cite[Chapter II]{guichardet-book}).
 Computations of group cohomology may therefore be pushed to the Lie algebra side, and thus reduced to problems in finite dimensional linear algebra which can be solved by a computer.
 
 \subsection{Separation by Betti numbers}
The aim of this section is to show how Corollary \ref{mainthm:cor} can be used to give new examples of non-quasi-isometric,  7-dimensional, csc, nilpotent Lie groups. The main virtue of these examples is that the use of computer algebra systems needed  is relatively light compared to the more advanced separation/non-separation results in the following section, which rely heavily on solving large systems of polynomial equations. \\
{Recall that  Gong's list \cite{gong} contains six 1-parameter (where the parameter ranges over  $\CC$ with the possible exception of finitely many values) families of complex $7$-dimensional nilpotent Lie algebras,
and nine 1-parameter families (with parameter ranging over $\RR$ with the possible exception of finitely many values) of real 7-dimensional nilpotent Lie algebras. To give the reader an idea of what these families look like we will present one example of a real 1-parameter family. We remark that we will use the notation used by Gong throughout this section, so that $\lbrace x_1,\dots, x_7\rbrace$ will always denote the basis elements of a given Lie algebra, and $\langle \cdots \rangle$ is used to denote the $\RR$-linear span of vectors. As is customary, all non-specified brackets between basis elements are implicitly set to zero.
\subsection*{$1357M$}
The family $1357M$ has the following bracket relations for $\lambda\in \RR\setminus\{0\}$:
\begin{align*}
[x_1,x_2]&=x_3, & [x_1,x_i]&=x_{i+2},\  i=3,4,5, & [x_2,x_4]=x_5,
\\
[x_2,x_6]&=\lambda x_7, & [x_3,x_4]&=(1-\lambda) x_7. &
\end{align*}
The family has lower central series
\begin{align*}
\langle x_1,x_2,x_3,x_4,x_5,x_6,x_7 \rangle \geq \langle x_3,x_5,x_6,x_7 \rangle \geq \langle x_5,x_7 \rangle \geq \langle x_7 \rangle,
\end{align*}
and Carnot algebra given by the relations
\begin{align*}
[\bar{x}_1,\bar{x}_2]&=\bar{x}_3, &  [\bar{x}_1,\bar{x}_i]&=\bar{x}_{i+1},i=3,4,5.
\end{align*}
The Carnot algebra corresponds to the 7-dimensional real Lie algebra denoted by $2457A$ in \cite{gong}.\\

 We may give a summary of the algebras, their lower central series dimensions and Betti numbers by means of  the following table:
\\
\begin{table}[!htbp]\label{Table:Summary}\resizebox{\columnwidth}{!}{\begin{tabular}{|c|c|c|c|c|c|c|}
\hline
$\CC$ & $123457I$ & $12457N$ & $1357N$ & $1357S$ & $1357M$ & $147E$
\\
\hline
$\RR$ & $123457I$ & \begin{tabular}{l|l} $12457N$ & $12457N_2$ \end{tabular} & $1357N$ & \begin{tabular}{l|l} $1357S$ & $1357QRS_1$ \end{tabular} & $1357M$ & \begin{tabular}{l|l} $147E$ & $147E_1$ \end{tabular}
\\
\hline
$\beta^n$ & 12344321 & 12344321 & 13577531&13677631 &13688631 &13799731
\\
\hline
LC & 754321 &  75421 & 7421 & 7431 & 7421 & 741
\\
\hline
$\mathfrak{Car}(\mathfrak{g})$ & $123457A$ &  \begin{tabular}{c|c} $12457L$ & $12457L_1$ \end{tabular} & $2457A$ & See below & $2457A$ & \begin{tabular}{l|l} $147E$ & $147E_1$ \end{tabular}
\\
\hline
\end{tabular}}
\vspace{0.5cm}
\caption{Summary of 1-parameter families of 7-dimensional nilpotent Lie algebras.}\end{table}
\\
The first row consists of the the complex families, the second of the real families (the complexification of each being the algebra right above it), 
and the third consists of the sequence of Betti numbers; i.e.~the dimensions of the  real cohomology groups. The fourth row consists of the dimensions of the algebras appearing in the lower central series, and the fifth row contains the Carnot algebra of the family in question}.
Here the families and their complexifications may be found listed in \cite{gong}, and the Betti numbers and lower central series dimensions  can easily be computed in e.g. Maple.  We note that \cite{Magnin} also provides a complete list of Betti numbers from which these may also be taken. We stress that we are here only using the generic Betti numbers: as one can see from e.g.~Magnin's paper, several families have a finite number of parameter-values for which the Betti numbers are different from the ones listed, but
since these finitely many non-generic cases are irrelevant for our purposes, we will disregard them here.
From the lower central series, the Carnot algebras are easily computed and may be compared (e.g.~using Maple) with those found in Gong's list. For most families one obtains a single (family of) Carnot algebra(s), 
but for the families $1357S$ and $1357QRS_1$ the situation is not quite as simple:   one here obtains that the family $1257S$ (with parameter $\lambda\in \RR\setminus\{1\}$) has Carnot algebra $N_{6,2,5}\oplus \RR$ when  $\lambda <1$ and $N_{6,2,5a} \oplus \RR$ when $\lambda>1$. Similarly, $1357QRS_1$ (with parameter $\lambda\in \RR\setminus\{0\}$) has associated Carnot algebra $N_{6,2,5}\oplus \RR$ when $\lambda < 0$ and $N_{6,2,5a} \oplus \RR$ when $\lambda>0$.\\
We now turn towards the quasi-isometry classification problem for members of different families.
By Pansu's theorem, the Carnot algebra is a quasi-isometry invariant,  so in most cases  Table \ref{Table:Summary} rules out the possibility that the csc nilpotent Lie groups associated with  two members of different families can be quasi-isometric. However,   the families $1357N$ and $1357M$ are seen to have the same associated Carnot Lie algebra and hence cannot be separated by Pansu's theorem, but they do have different Betti numbers, and thus separation here follows from Corollary \ref{mainthm:cor}.
Since at most a countable number of members of each family have an associated csc Lie group which admits a lattice \cite{malcev}, this illustrates an instance where Corollary \ref{mainthm:cor} can tell apart (up to quasi-isometry) Lie groups that are not distinguished by neither the results of Pansu, nor by those of Sauer and Shalom \cite{sauer, shalom-harmonic-analysis}. 
We note that  $1357S$ and $1357QRS_1$   are the only ones for which no generic separation between the two families can be made from neither Pansu's theorem nor their Betti numbers, although Pansu's theorem of course proves partial information, in  that members whose parameter values give rise non-isomorphic Carnot algebras cannot be quasi-isometric.


\subsection{Separation within continuous $1$-parameter families}
Another natural question is  whether it is possible to use Corollary \ref{mainthm:cor} to find the first uncountable family of pairwise non-quasi-isometric csc nilpotent Lie groups that are not already distinguished by Pansu's teorem \cite[Th{\'e}or{\`e}me 3]{pansu}.
As can be seen from Table \ref{Table:Summary}, the nine 1-parameter families of real 7-dimensional nilpotent Lie algebras fall in three groups:

\begin{itemize}
\item[(i)] $147E$ and $147E_1$ which are Carnot;
\item[(ii)]  $123457I, 12457N, 12457N_2, 1357M$ and $1357N$ which are not Carnot, but within each family the associated family of Carnot algebras, generically, consists of isomorphic  members;
\item[(iii)] $1357S$ and $1357QRS_1$, where each family has two isomorphism classes of associated Carnot algebras.
\end{itemize}

We will now show that each family in the second class (ii) have generically isomorphic cohomology algebras, and  that each  family in the third class (iii) gives rise to cohomology algebras which fall within uncountably many isomorphism classes. For results about the class (i), see Remark \ref{rem:dim5-6}. \\

Using Maple, one can compute a linear basis for the cohomology of any given finite dimensional Lie algebra, as well as multiplication tables for for the cup product. 
As an illustration of the type of multiplication tables one encounters when doing this for the 1-paramter families, we include here two examples:

\begin{table}[!htbp]
\label{Table:1357M}
\resizebox{\columnwidth}{!}{$
\begin {array}{|cccccccc|} 
\hline
0&0&-{\frac {{t}^{4}-2\,{t}^{3}+
4\,{t}^{2}-2\,t+2}{{t}^{2}-t+1}}&0&0&0&0&-{\frac {{t}^{2}+2
}{{t}^{2}-t+1}}\\ 0&-{\frac {6\,{t}^{4}-6\,{t}^{
3}+13\,{t}^{2}-4\,t+6}{{t}^{2} \left( {t}^{2}-4\,t+4 \right) }}&0&0
&0&0&0&0\\ -{\frac {{t}^{2}-2\,t+2}{{t}^{2
}-2\,t+1}}&0&0&0&0&0&0&0\\ 0&0&-
{\frac {{t}^{3}-3\,{t}^{2}+4\,t-2}{{t}^{2}-t+1}}&0&0&0&0&-{
\frac {{t}^{2}+2}{{t}^{2}-t+1}}\\0&0&0&0&0
&0&2\,{\frac {{t}^{2}+t+1}{{t}^{2}+2\,t+1}}&0
\\ 0&0&0&0&0&-1&0&0
\\ 0&0&0&0&{\frac {{t}^{4}-2\,{t}^{3}+5\,{t
}^{2}-4\,t+6}{{t}^{4}-2\,{t}^{3}+3\,{t}^{2}-2\,t+1}}&0&0&0
\\ 0&0&0&1&0&0&0&0\\
\hline
\end {array}
$}
\vspace{0.5cm}
 \caption{Multiplication table for 3rd and 4th cohomology for the family $1357M$}
\end{table}

\begin{table}[!htbp]
\label{Table:12457N}
\resizebox{\columnwidth}{!}{$
\arraycolsep=1.4pt\def\arraystretch{2.2}
\begin {array}{|cccc|} 
\hline
-{\frac {3\,{t}^{4}-3\,{t}^{3}+32\,
{t}^{2}-10\,t+108}{3t \left( {t}^{2}-2\,t+6 \right) }}&-{\frac {
3\,{t}^{6}-6\,{t}^{5}+50\,{t}^{4}-52\,{t}^{3}+276\,{t}^{2}-120\,t+504
}{6t \left( {t}^{2}-2\,t+6 \right) }}&-{\frac {15\,{t}^{4}-30\,{
t}^{3}+158\,{t}^{2}-160\,t+432}{3t \left( {t}^{2}-2\,t+6 \right) }}&
{\frac {3\,{t}^{5}+6\,{t}^{4}+14\,{t}^{3}+100\,{t}^{2}-28\,t+360}{6
t \left( {t}^{2}-2\,t+6 \right) }}\\ {\frac {
3\,{t}^{3}+2\,{t}^{2}+7\,t+8}{2({t}^{2}-2\,t+6)}}&{\frac {3\,{t}^{
6}+5\,{t}^{5}+27\,{t}^{4}+37\,{t}^{3}+78\,{t}^{2}+42\,t+48}{4({t}^{3}-{t
}^{2}+4\,t+6)}}&-{\frac {3\,{t}^{4}+5\,{t}^{3}+5\,{t}^{2}+23\,t-16}{{
t}^{3}-{t}^{2}+4\,t+6}}&-{\frac {3\,{t}^{5}-7\,{t}^{4}-{t}^{3}-
45\,{t}^{2}-34\,t-196}{4({t}^{3}-{t}^{2}+4\,t+6)}}\\ {\frac {11\,{t}^{2}-2\,t+41}{3({t}^{2}-2\,t+6)}}&{\frac {11\,
{t}^{4}-2\,{t}^{3}+107\,{t}^{2}-12\,t+246}{6({t}^{2}-2\,t+6)}}&-{
\frac {2(11\,{t}^{2}-2\,t+41)}{3({t}^{2}-2\,t+6)}}&-{\frac {11\,{t}^{
3}-24\,{t}^{2}+45\,t-82}{6({t}^{2}-2\,t+6)}}\\ -
{\frac {3\,{t}^{3}-14\,{t}^{2}+28\,t-27}{3t \left( {t}^{2}-2\,t+6
 \right) }}&{\frac {8\,{t}^{4}-4\,{t}^{3}+57\,{t}^{2}-12\,t+126
}{6t \left( {t}^{2}-2\,t+6 \right) }}&{\frac {11\,{t}^{2}-34\,t+
108}{3t \left( {t}^{2}-2\,t+6 \right) }}&-{\frac {8\,{t}^{3}-2\,
{t}^{2}-t+90}{6t \left( {t}^{2}-2\,t+6 \right) }}\\
\hline
\end {array} 
$}
\vspace{0.5cm}
\caption{Multiplication table for 3rd and 4th cohomology for the family $12457N$}
\end{table}
The tables should be read as follows: the parameter $t$ is the parameter of the family $(\mathfrak{g}_t)_t$ in question. In Table 1, we have that $ \Cohom^3(\mathfrak{g}(t),\RR)$ is 8 dimensional with a fixed basis $e_3^1(t), \dots, e_3^8(t)$, that $ \Cohom^4(\mathfrak{g}(t),\RR)$ is also 8 dimensional with a fixed basis $e_4^1(t), \dots, e_4^8(t)$ and that $ \Cohom^7(\mathfrak{g}(t),\RR)$ is one dimensional with a fixed basis $e_7(t)$. The $(i,j)$th entry of the table is the coefficient of the product $e_3^i(t)\smile e_4^j(t)$; e.g.~we have 
$$e_3^1(t)\smile e_4^3(t)=-{\frac {{t}^{4}-2\,{t}^{3}+
4\,{t}^{2}-2\,t+2}{{t}^{2}-t+1}}e_7(t)$$ from the first table. A similar reasoning applies to Table 2.\\

We now explain how to construct isomorphisms of graded $\RR$-algebras
\[
\varphi_{t,s} \colon \Cohom^*(\mathfrak{g}(s),\RR)\to \Cohom^*(\mathfrak{g}(t),\RR).
\]
 Denote again the fixed basis of $\Cohom^*(\mathfrak{g}(\lambda),\RR)$ by $e_i^j(\lambda)$, so that for fixed $i$  the elements $e_i^j(\lambda)$ is a basis for $\Cohom^i(\mathfrak{g}(\lambda),\RR)$.   Using the fact that $\varphi_{t,s}$ should be linear and graded we know that $\varphi_{t,s}$ is given by the coefficients associated with the image of the basis elements: 
$$ \varphi_{t,s}(e_i^j(s))= \smashoperator{\sum_{k=1}^{ \beta_i(\mathfrak{g}(t))}} \alpha_{i,j}^k e_i^k(t).$$
 In other words, we may consider $\varphi$ as a block-diagonal matrix, where the blocks are $\beta_i(\mathfrak{g}(t)) \times \beta_i(\mathfrak{g}(t))$-matrices.
 The map $\varphi$ being an $\RR$-algebra homomorphism is then encoded by  solutions to a polynomial system of equations in the variables $\alpha_{i,j}^k$ arising from the multiplication table, and one can easily check whether $\varphi$ is bijective or not by taking the determinant of the matrix corresponding to the solutions. \\
The method used to obtain solutions to the system is {as follows}, and the implementation has been done in Maple 19: All equations appearing are polynomial in the variables $\alpha_{i,j}^k$ with certain rational functions in the parameters $s$ or $t$ as coefficients. 
First the system is simplified by inspecting the equations, and replacing every equation of the form $p(s)\alpha_{i,j}^k=0$ (with $p$ a rational function in $s$) with $\alpha_{i,j}^k=0$ , and similarly for equations of the form $p(t)\alpha_{i,j}^k\alpha_{\ell,m}^n=0$. Then using the zero-product rule on equations of the second type above, along with a non-singularity assumption on all the block-matrices along the diagonal, one may in certain cases conclude which factor should be $0$. To illustrate this, consider the family $1357M$ which has $\beta_1(\mathfrak{g}(t))=3$ for all but finitely many values of $t$, and for which the corresponding system of equations contains the following three:
$$\alpha_{1,1}^1\alpha_{4,4}^6=0, \quad \alpha_{1,2}^1\alpha_{4,4}^6=0, \quad \alpha_{1,3}^1\alpha_{4,4}^6=0
$$
Since $\alpha_{1,1}^1=0, \alpha_{1,2}^1=0$ and $\alpha_{1,3}^1=0$ would cause the first block-matrix to be singular, we may conclude that $\alpha_{4,4}^6=0$.
These zero-values are then substituted into the system which is then reduced. The procedure is then continued until no further conclusions can be made this way. 
{After this, we tried solving the system. In case the system was still too complex to solve in reasonable time, we would make qualified guesses on values of variables based on solutions of parts of the system. This process could be repeated several times. After a full solution was obtained, we would set any remaining free variables to 1, compute the determinants of the diagonal matrices, and verify the solution by substituting it into the system. For all families of the second class (ii), this provided a solution which was an isomorphism for all but finitely many values of the parameter, so that we have the following:
\begin{prop}
\label{Prop:CohmIso}
Within each of the 7-dimensional one-parameter families $123457I$, $12457N$, $12457N_2$, $1357M$ and $1357N$ of nilpotent Lie algebras, all members, except for perhaps finitely many, have isomorphic cohomology algebras. 
\end{prop}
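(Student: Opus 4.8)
The plan is to reduce the statement to a finite, certifiable computation in linear and commutative algebra. Fix one of the five families $(\mathfrak{g}(t))_t$ and recall that, by the van Est theorem, $\Cohom^*(\mathfrak{g}(t),\RR)$ may be computed from the Chevalley--Eilenberg complex $(\Lambda^\bullet \mathfrak{g}(t)^*, d_t)$. Since all structure constants of $\mathfrak{g}(t)$ are polynomial (indeed affine) in $t$, the coboundary maps $d_t$ have matrix entries that are polynomials in $t$, so performing Gaussian elimination over the field $\RR(t)$ of rational functions produces bases $e_i^1(t),\dots, e_i^{\beta_i}(t)$ of each $\Cohom^i(\mathfrak{g}(t),\RR)$ whose coordinates are rational functions of $t$ with only finitely many poles. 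This simultaneously confirms that the Betti numbers $\beta_i$ are constant for all but finitely many $t$, in agreement with Table \ref{Table:Summary}, and computing the cup products $e_i^j(t)\smile e_{i'}^{j'}(t)$ in these bases yields structure constants that are again rational functions of $t$, as exhibited in Tables \ref{Table:1357M} and \ref{Table:12457N}.

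Next I would set up the isomorphism problem explicitly. A candidate graded linear map $\varphi_{s,t}\colon \Cohom^*(\mathfrak{g}(s),\RR)\to \Cohom^*(\mathfrak{g}(t),\RR)$ is determined by a block-diagonal matrix whose $i$-th block records the coefficients $\alpha_{i,j}^k$ in the expression $\varphi_{s,t}(e_i^j(s))=\sum_k \alpha_{i,j}^k e_i^k(t)$. The requirement that $\varphi_{s,t}$ respect the cup product amounts to finitely many equations, one for each product $e_i^j(s)\smile e_{i'}^{j'}(s)$ of basis elements, and these translate into a polynomial system in the unknowns $\alpha_{i,j}^k$ whose coefficients are the rational structure constants in $s$ and $t$. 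Once a solution is found, bijectivity of $\varphi_{s,t}$ (and hence its being an isomorphism of graded $\RR$-algebras) is guaranteed as soon as each diagonal block has nonzero determinant.

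The main obstacle is purely computational: the resulting system is large and its coefficients are unwieldy rational functions, so a direct elimination, for instance a Gr\"obner basis computation, is infeasible. I would therefore solve it by the staged reduction described above --- using equations of the form $p(s)\alpha_{i,j}^k=0$ to force $\alpha_{i,j}^k=0$, and exploiting the zero-product rule together with the standing non-singularity of the diagonal blocks to decide which factor of a vanishing product must itself vanish --- iterating this until the residual system becomes tractable, and making educated guesses for any remaining free variables when necessary.

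Crucially, none of these heuristics affect the rigour of the conclusion. Once a candidate family of block-diagonal matrices with entries rational in $s$ and $t$ has been produced, one verifies \emph{a posteriori}, by direct substitution, that it satisfies every homomorphism equation identically in $s,t$ and that each block determinant is a nonzero rational function; this is a finite and certifiable calculation, independent of how the solution was discovered. Specialising $s$ to a fixed generic value $s_0$, the block determinants and the coefficient denominators become rational functions of $t$ alone, vanishing for only finitely many $t$, so that $\varphi_{s_0,t}$ is a genuine isomorphism $\Cohom^*(\mathfrak{g}(s_0),\RR)\simeq \Cohom^*(\mathfrak{g}(t),\RR)$ for all but finitely many $t$. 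Since being isomorphic is transitive, all but finitely many members of each of the five families then have isomorphic cohomology algebras, which is the assertion of the proposition.
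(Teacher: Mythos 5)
Your proposal is correct and takes essentially the same approach as the paper: compute bases and cup-product tables with rational-function coefficients, set up the block-diagonal polynomial system for a graded homomorphism $\varphi_{t,s}$, reduce it with the same zero-product/non-singularity heuristics and educated guesses, and certify the answer a posteriori by substitution together with nonvanishing block determinants. Your only additions --- justifying rationality of the structure constants via elimination over $\RR(t)$ and concluding by specialising $s$ to a generic value and invoking transitivity of isomorphism --- are clean make-explicit steps that the paper leaves implicit.
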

Thus, Corollary \ref{mainthm:cor} cannot be used to tell  the Lie groups associated with the members of these families apart up to quasi-isometry.}
We remark that obtaining solutions to the equation systems described above carries a high degree of complexity. For example, the system of equations for the family 1357M consists of 1193 unique (but possibly equivalent) equations, with up to 11 summands of the form $p(t) \alpha_{j,i}^k\alpha_{\ell,m}^n$, where the coefficients $p(t)$, as already mentioned, are rational functions in $t$. In contrast to this, the system obtained from the family $12457N$ consists of 94 equations, each of which contains up to 17 summands.
The concrete formulas for the graded $\RR$-algebra isomorphisms $\varphi_{t,s} \colon \Cohom^*(\mathfrak{g}(s),\RR)\to \Cohom^*(\mathfrak{g}(t),\RR)$  are not included here due to their sizes; for instance for the family $12457N$ the $\varphi_{t,s}$ is given by $20\times 20\,$-matrix, and the expression below is an example of one of its prototypical non-zero entries: 
\begin{align*}
& \quad {\tfrac {3(t-s)}{ \left( t+1 \right)  \left( 6\,{t}^{3}-
19\,{t}^{2}-16\,t-167 \right)  \left( s+1 \right)  \left( {s}^{3}-2\,{
s}^{2}+17\,s-2 \right) }  } \Big[ \left( -4+{t}^{4}-\tfrac{7}{6}t^3-{
\tfrac {20}{3}}{t}^{2}+{\tfrac {37}{6}t} \right) {s}^{4} +
\\
&\left.+ \left( 2-\tfrac{7}{6}{t}^{4}-{\tfrac {62}{3}{t}^{3}}-{\tfrac {143}{6}{t}^{2}}-17\,t
 \right) {s}^{3} \right.  + \left( {\tfrac {207}{2}t}-{\tfrac{175}{3}}-{\tfrac {
20}{3}{t}^{4}}-{\tfrac {143}{6}{t}^{3}}-{\tfrac {226}{3}{t}^{2}}
 \right) {s}^{2}+
\\
& + \left( -{\tfrac{400}{3}}+{\tfrac {37}{6}{t}^{4}}-17
{t}^{3}+{\tfrac {207}{2}{t}^{2}}+{\tfrac {640}{3}t} \right) s
 -{
\tfrac {400}{3}t}-4\,{t}^{4}+2\,{t}^{3}-{\tfrac{119}{3}}-{\tfrac {175}{3}{t}^{2}}
\Big]
\end{align*}
Maple worksheets performing this procedure with full solutions  (and the correct guesses written down) can be obtained from the authors upon request; see also Remark \ref{rem:urlrem}.
\\

For the two  families in (iii), we will now show that the picture is quite different, in that the associated families of cohomology algebras fall into uncountably many isomorphism classes.
{As noted earlier, the families are not separable by Pansu's theorem, as it can see only two quasi-isometry classes.
However, by appealing to Corollary \ref{mainthm:cor} we do in fact obtain uncountably many classes:}

\begin{theorem}
\label{Thm:NonIso}
Let $(\mathfrak{g(\lambda}))_{\lambda}$ denote either of the families $1357S$ and $1357QRS_1$. Then for every parameter value $\lambda_0$ there exist at most finitely many other parameter values $\lambda$ for which  $\Cohom^*(\mathfrak{g}(\lambda),\RR)\simeq  \Cohom^*(\mathfrak{g}({\lambda_0}),\RR)$. In particular, the 1-parameter families of csc nilpotent Lie groups associated with $1357S$ and $1357QRS_1$ fall into uncountably many quasi-isometry classes.
\end{theorem}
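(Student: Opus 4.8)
The plan is to reduce the isomorphism problem for the cohomology algebras to a problem about \emph{rational invariants} of the cup-product structure, and then to show that at least one such invariant depends non-constantly on the parameter $\lambda$. For a csc nilpotent Lie group $G$ with Lie algebra $\mathfrak{g}$, the van Est theorem supplies a graded $\RR$-algebra isomorphism $\Cohom^*(G,\RR)\simeq \Cohom^*(\mathfrak{g},\RR)$, and by Corollary \ref{mainthm:cor} a quasi-isometry between two members of the family forces an isomorphism of these graded algebras. So it suffices to prove the purely algebraic assertion that, within each of $1357S$ and $1357QRS_1$, the map $\lambda\mapsto \Cohom^*(\mathfrak{g}(\lambda),\RR)$ has only finitely many members in each isomorphism class. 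The uncountably-many-classes conclusion then follows immediately, since an uncountable set partitioned into classes each of which is finite must have uncountably many classes.

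First I would fix, as in Proposition \ref{Prop:CohmIso}, the bases $e_i^j(\lambda)$ of each graded piece $\Cohom^i(\mathfrak{g}(\lambda),\RR)$ and record the cup-product multiplication tables as matrices with entries that are rational functions of $\lambda$ (cf.\ Tables \ref{Table:1357M} and \ref{Table:12457N}). A graded algebra isomorphism $\varphi_{\lambda_0,\lambda}$ is encoded by a block-diagonal matrix $(\alpha_{i,j}^k)$ whose blocks are invertible and which satisfies the polynomial system expressing multiplicativity. The key step is to extract from this system a quantity $P(\lambda)$ that is a genuine \emph{isomorphism invariant} — concretely, a ratio of products of structure coefficients chosen so that any change of graded basis leaves it fixed — and to verify, by a Maple computation on the multiplication tables, that $P(\lambda)$ is a non-constant rational function of $\lambda$. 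If $\mathfrak{g}(\lambda)\simeq\mathfrak{g}(\lambda_0)$ then $P(\lambda)=P(\lambda_0)$, and since a non-constant rational function takes any fixed value only finitely often, the fibre $P^{-1}(P(\lambda_0))$ is finite; this is exactly the claimed finiteness.

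The main obstacle is the construction of the invariant $P$: one must exhibit a scalar built from the cup-product data that is provably insensitive to the choice of homogeneous basis (so that it is attached to the \emph{algebra}, not to a presentation), yet still varies with $\lambda$. A natural way to manufacture such invariants is to consider the multiplication pairing $\Cohom^i\times\Cohom^{n-i}\to\Cohom^n\cong\RR$ in top degree $n=7$ (Poincar\'e-type duality for the one-dimensional top cohomology), together with lower cup products, and to form determinants or trace-type expressions of the resulting matrices; a determinant of a pairing matrix scales by $\det$ of the basis-change blocks under a change of basis, so ratios of two such determinants — or the characteristic polynomial of a canonically defined endomorphism of a fixed graded piece — give honest invariants. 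The delicate point is ensuring the chosen combination is \emph{both} invariant \emph{and} non-constant; in practice one would compute several candidate ratios from the tables and select one whose numerator and denominator are coprime polynomials of positive degree in $\lambda$. Once such a $P(\lambda)$ is found, the remaining argument is purely formal, and the distinct-values-only-finitely-often principle closes the proof for both families simultaneously.
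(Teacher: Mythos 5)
Your reduction of the statement to a purely algebraic finiteness claim, and the final step (an uncountable set partitioned into finite classes has uncountably many classes), are both fine and agree with the paper. But the core of your argument — the existence of a scalar $P(\lambda)$, built from the cup-product tables, that is simultaneously (a) invariant under all graded algebra isomorphisms and (b) a non-constant rational function of $\lambda$ — is precisely the part you do not carry out, and it is the entire mathematical content of the theorem. You acknowledge this yourself (``the main obstacle is the construction of the invariant $P$''), and your suggested recipes do not close the gap: a ratio of products of structure coefficients is \emph{not} basis-independent in general; determinants of the pairing matrices $\Cohom^i\times\Cohom^{7-i}\to\Cohom^7$ transform under a block change of basis by factors $\det(A_i)\det(A_{7-i})c^{-\beta^i}$, and cancelling all such factors while retaining non-constancy is a nontrivial combinatorial problem; and there is no ``canonically defined endomorphism'' of a graded piece in sight. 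Worse, there is no a priori guarantee that \emph{any} invariant of this determinant-ratio type separates the members of these particular families: such invariants could all be constant in $\lambda$ even while the algebras are pairwise non-isomorphic, in which case your strategy fails outright rather than merely being incomplete. So what you have is a plausible plan contingent on a computation you have neither specified nor performed, not a proof.

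It is worth contrasting this with what the paper actually does, because the paper's route is designed to avoid exactly this obstacle (the authors report that the direct approach of solving, or obstructing, the polynomial isomorphism equations did not terminate). Instead they use formal deformation theory: the family $\bigl(\Cohom^*(\mathfrak{g}(\lambda),\RR)\bigr)_\lambda$ is realised as specialisations of a formal deformation of $\Cohom^*(\mathfrak{g}(\lambda_0),\RR)$ whose infinitesimal $f_1$ is a Hochschild $2$-cocycle; Lemma \ref{Lemma:Diff} shows that a \emph{differentiable} family of isomorphisms forces $f_1$ to be a Hochschild coboundary; Lemma \ref{Lemma:Solutions} uses Nash cell decomposition of semi-algebraic sets to upgrade ``isomorphisms exist for infinitely many $\lambda$'' to ``a differentiable family of isomorphisms exists on some open interval''; and a Maple computation on a \emph{linear} system (coboundary equations for $f_1$) verifies that $f_1$ is not a coboundary for all but finitely many $\lambda_0$. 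The decisive advantage is that checking whether a given $2$-cocycle is a coboundary is linear algebra, whereas both the direct isomorphism equations and your invariant-hunting are genuinely nonlinear problems; this is what makes the paper's argument computationally feasible and logically airtight where your proposal leaves the hard step open.
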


{
To prove Theorem \ref{Thm:NonIso}, one could of course again implement the equations satisfied by a potential isomorphism in Maple with the hope that there are no solutions. However, doing so did not lead to any conclusion as an attempt to solve the system did not terminate within a reasonable time frame, and any guesses added to the system would  make it unable to be used to disprove the existence of isomorphisms}.  Instead, we will require a bit of deformation theory to prove Theorem \ref{Thm:NonIso}. For our purpose a very ad hoc introduction suffices, but the interested reader may consult \cite{Gerstenhaber-deformation} for a more thorough presentation. \\

Given an algebra $A$, a one-parameter (formal) deformation of $A$, is a formal power series $F=\sum_{i=0}^\infty f_it^i$ such that the coefficients are linear maps $f_i\colon A\otimes A \to A$ and $f_0$ is the multiplication in $A$.  The first nontrivial $f_i$ with $i\geq 1$ is called the \emph{infinitesimal} of the deformation.  The power series $F$ defines a product on the space of formal power series $A[[t]]$ with coefficients in $A$, by defining $a\ast b:= \sum_{i=0}^\infty f_i(a,b)t^i$ for $a,b\in A$ and extending to all of $A[[t]]$.  If this product is associative, the infinitesimal $f_{i_0}\colon A\otimes A \to A$ becomes a Hocschild $2$-cocycle. There is of course a notion of triviality and equivalence between deformations, and these concepts turn out to be intimately linked with triviality of the class of $f_{i_0}$ in $\Cohom^2(A,A)$  (see e.g.~\cite{Gerstenhaber-deformation}).  To this end, we recall that $f_{i_0}$ is a Hochschild coboundary if there exists a linear map $g\colon A\to A$ such that $f_{i_0}=\del_1 g$, where
\[
\del_1(g)(a,b) :=ag(b)-g(ab) +g(a)b.
\]
If the algebra $A$ comes equipped with a topology and $\lambda\in \RR$ is such that the series $F_{t}(a,b)=\sum_{i=0}^\infty f_i(a,b)t^i$ converges for $t=\lambda$, then  $(A, F_\lambda)$ becomes an algebra (associative if $F$ is so) called the specialisation of $F$ at $\lambda$. In general, a (formal) deformation may not have any specialisations, but we will show below that when $(\mathfrak{g}_\lambda)_\lambda$ is one of the families $1357S$ or $1357QRS_1$, then the family of cohomology algebras $(\Cohom^*(\mathfrak{g}_\lambda,\RR))_\lambda$ may be viewed as specialisations of a formal deformation. We are of course interested in whether  or not the cohomology algebras $(\Cohom^*(\mathfrak{g}_\lambda,\RR))_\lambda$ are pairwise isomorphic, which is an a priori different notion of triviality than the one considered in formal deformation theory,  but as the following two lemmas will show, triviality/non-triviality of the $2$-cocycle $f_1$, which turns out to be  the infinitesimal, again plays an important role.

\begin{lemma}
\label{Lemma:Diff}
Let $A$ be a finite-dimensional real algebra, equipped with the Euclidean topology, and let $F$ be a deformation of $A$. Assume that there is a neighbourhood of zero, $U\subset \RR$, on which $F$ admits specialisations $\lbrace A_\lambda \rbrace_{\lambda\in U}$,  such that there exists a family of isomorphisms $\lbrace \varphi_\lambda\colon A_\lambda \to A_0 \rbrace_{\lambda\in U }$ with  $\varphi_0$ being the identity. Assume further that   $\frac{d}{d\lambda}\varphi_\lambda(a)\vert_{\lambda=0}:= \lim_{\lambda\to 0} \frac{\varphi_{\lambda}(a)-a}{\lambda}$ exists for all $a\in A$. Then the map $g\colon A \to A$ defined by $g(a)=\frac{d}{d\lambda}\varphi_\lambda(a)\vert_{\lambda=0}$ satisfies $\partial_1g=f_1$;  thus $f_1$ is a Hochschild coboundary. 

\begin{proof}
This is an easy consequence of taking the derivative at zero on both sides of the equation
$\varphi_\lambda(a)\varphi_\lambda(b)=\varphi_\lambda(ab)+ \sum_{k=1}^\infty \varphi_\lambda(f_i(a,b))\lambda^i $, noting that $\varphi_0$ is the identity so that the Leibniz rule gives 
\[
\frac{d}{d\lambda}(\varphi_\lambda(a)\varphi_\lambda(b)) \vert_{\lambda=0}=ag(b)+g(a)b,
\]
and  that
$$\tfrac{d}{d\lambda}\varphi_\lambda(f_i(ab)\lambda^i)\vert_{\lambda = 0}=\left\lbrace \begin{matrix}
g(ab) & i=0
\\
f_1(a,b) & i=1
\\
0 & i>1
\end{matrix}\right.
$$
\end{proof}
\end{lemma}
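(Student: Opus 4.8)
The plan is to use the only structural input available---that each $\varphi_\lambda$ is an algebra isomorphism from the specialisation $A_\lambda$ onto $A_0=A$---and to extract first-order information by differentiating this relation at $\lambda=0$. Writing $\ast_\lambda$ for the product of $A_\lambda$, the defining property of $\varphi_\lambda$ is $\varphi_\lambda(a\ast_\lambda b)=\varphi_\lambda(a)\varphi_\lambda(b)$, the product on the right being that of $A_0$. Since $a\ast_\lambda b=\sum_{i\geq 0}f_i(a,b)\lambda^i$ with $f_0$ the multiplication of $A$, this is the same as
\[
\varphi_\lambda(a)\varphi_\lambda(b)=\varphi_\lambda(ab)+\sum_{i\geq 1}\varphi_\lambda\bigl(f_i(a,b)\bigr)\lambda^i ,\qquad a,b\in A,\ \lambda\in U,
\]
which is the identity I would differentiate.

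First I would record the differentiability statements needed. Because $A$ is finite-dimensional, the hypothesis that $g(a)=\tfrac{d}{d\lambda}\varphi_\lambda(a)\vert_{\lambda=0}$ exists for every $a$ says precisely that the matrix $M(\lambda)$ of $\varphi_\lambda$ is differentiable at $0$ with $M'(0)$ the matrix of $g$, and by assumption $M(0)=\id$. On the other side, since $U$ is a neighbourhood of $0$ on which the defining series converges, $\lambda\mapsto a\ast_\lambda b$ is a convergent power series, hence differentiable at $0$ with derivative equal to its linear coefficient $f_1(a,b)$.

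Next I would differentiate both sides of the displayed equation at $\lambda=0$. The left-hand side is a product in the fixed algebra $A_0$, so bilinearity and the Leibniz rule, together with $\varphi_0=\id$, give
\[
\frac{d}{d\lambda}\bigl(\varphi_\lambda(a)\varphi_\lambda(b)\bigr)\Big\vert_{\lambda=0}=g(a)b+ag(b).
\]
For the right-hand side I would avoid differentiating the series termwise and instead view it as $\varphi_\lambda(a\ast_\lambda b)=M(\lambda)\,(a\ast_\lambda b)$, the matrix-vector product of two differentiable curves; the Leibniz rule then yields $M'(0)(ab)+M(0)f_1(a,b)=g(ab)+f_1(a,b)$. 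Equating the two derivatives and rearranging gives $f_1(a,b)=ag(b)-g(ab)+g(a)b=(\partial_1 g)(a,b)$, as claimed.

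I do not expect a genuine obstacle here. The only potentially delicate point is the interchange of the derivative with the infinite deformation sum, and the plan above sidesteps it entirely by treating the right-hand side as $\varphi_\lambda(a\ast_\lambda b)$: this reduces the computation to the derivative of a single convergent power series (whose value at $0$ is, by definition, the coefficient $f_1(a,b)$) and to the differentiability of $\varphi_\lambda$, which is exactly the standing hypothesis. In essence the lemma merely says that the first-order term of an isomorphism identifying a deformation with its specialisation at $0$ realises the infinitesimal $f_1$ as the Hochschild coboundary $\partial_1 g$.
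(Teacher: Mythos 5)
Your proposal is correct and follows the same basic strategy as the paper: differentiate the multiplicativity relation $\varphi_\lambda(a)\varphi_\lambda(b)=\varphi_\lambda(ab)+\sum_{i\geq 1}\varphi_\lambda(f_i(a,b))\lambda^i$ at $\lambda=0$ and apply the Leibniz rule on the left, using $\varphi_0=\id$. The one place where you genuinely differ is the treatment of the right-hand side: the paper differentiates the series term by term (getting $g(ab)$ from the $i=0$ term, $f_1(a,b)$ from the $i=1$ term, and $0$ for $i>1$), which strictly speaking requires interchanging $\frac{d}{d\lambda}$ with the infinite sum; since the summands involve $\varphi_\lambda$, about which one only knows pointwise differentiability at $0$ with no uniform control in $i$, that interchange is not automatic as stated. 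Your device of rewriting the right-hand side as $M(\lambda)\,(a\ast_\lambda b)$ — a product of two curves, each differentiable at $0$ (the matrix $M(\lambda)$ by the standing hypothesis applied to basis vectors, the vector $a\ast_\lambda b$ because it is a convergent power series whose linear coefficient is $f_1(a,b)$) — and invoking the product rule for the bilinear matrix-vector pairing sidesteps this issue entirely, and is the cleaner argument. Both computations give $g(ab)+f_1(a,b)$ on the right, hence $f_1=\partial_1 g$ as required.
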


\begin{lemma}
\label{Lemma:Solutions}
Let $V$ be a  finite dimensional vector space with a fixed basis $e_1,\dots, e_n$, and let $U\subset \RR$ be an open interval such that for each $\lambda\in U$ one has a product $\ast_\lambda$ on $V$ turning it into an associative real algebra $A_\lambda$. Assume moreover that the coefficients of $e_k\ast_\lambda e_l$ relative to  the fixed basis are given by  rational functions  in $\lambda$.  Fix $\lambda_0\in U$ and assume that for infinitely many $\lambda\in U$ there exists  an algebra-isomorphism from $A_\lambda$ to $A_{\lambda_0}$. Then there exists an open set $V\subset U$ and $n\times n$-matrices $((\alpha_{ij}(\lambda))_{ij})_{\lambda\in V}$ such the associated maps in $\operatorname{End}(V)$ are algebra-isomorphisms $A_{\lambda} \simeq A_{\lambda_0}$ and such that  $\lambda\mapsto \alpha_{ij}(\lambda)$ is differentiable for every $1\leq i,j \leq n.$

\end{lemma}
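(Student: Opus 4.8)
The plan is to extract a differentiable family of isomorphisms from the hypothesized infinite family of (a priori only measurable, or arbitrary) isomorphisms by exploiting the fact that the isomorphism condition is \emph{algebraic} in the matrix entries with \emph{rational-function} coefficients. The key point is that the set of isomorphisms $A_\lambda \simeq A_{\lambda_0}$ is cut out, inside $\operatorname{End}(V) \times U$, by a system of polynomial equations (the homomorphism relations $\varphi(e_k \ast_\lambda e_l) = \varphi(e_k)\ast_{\lambda_0}\varphi(e_l)$ together with invertibility, the latter encoded by an auxiliary variable $\delta$ with $\delta \det(\alpha_{ij}) = 1$), whose coefficients are rational functions of $\lambda$. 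After clearing denominators — legitimate on the open interval $U$ away from the finitely many poles — this becomes a genuine real algebraic variety $Z \subset \RR^{n^2+1}\times U$, and the hypothesis says that its projection to the $\lambda$-axis contains infinitely many points of $U$.

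First I would set up this variety $Z$ precisely and observe that the projection $\pi\colon Z \to U$ is a morphism of real algebraic (or semi-algebraic) sets whose image is infinite. The main tool is then semi-algebraic geometry: by the Tarski--Seidenberg theorem the image $\pi(Z)$ is a semi-algebraic subset of $U$, hence a finite union of points and open intervals; since it is infinite it must contain a nontrivial open interval $V \subset U$ (which we may shrink to ensure it avoids all poles of the defining rational functions and that $\lambda_0$ lies in its closure if needed). Over $V$ the fibres of $\pi$ are all nonempty, and the goal is to produce a \emph{differentiable} section $\lambda \mapsto (\alpha_{ij}(\lambda))$.

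The step I expect to be the main obstacle is producing the \emph{differentiable} selection, rather than merely a measurable or continuous one; this is where some care is needed. The cleanest route is the semi-algebraic curve selection / cell-decomposition machinery: by the theorem on semi-algebraic triviality (or by applying cylindrical algebraic decomposition to $Z$ over the interval $V$, after a further shrinking), one obtains a semi-algebraic and continuous section $\sigma\colon V \to Z$, and semi-algebraic continuous functions of one real variable are automatically real-analytic, hence differentiable, off a finite set; shrinking $V$ once more removes that finite set. Thus each coordinate $\lambda \mapsto \alpha_{ij}(\lambda)$ is differentiable on the resulting open interval $V$, and by construction the associated endomorphism is an isomorphism $A_\lambda \simeq A_{\lambda_0}$ for every $\lambda \in V$, which is exactly the assertion. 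An alternative, more hands-on argument avoiding heavy machinery would be to fix one isomorphism $\varphi_{\lambda_1}$ at a single parameter $\lambda_1$ in the infinite solution set and to solve the homomorphism equations for nearby $\lambda$ by the implicit function theorem; the obstacle there is that the relevant Jacobian need not be invertible (the isomorphism group $\operatorname{Aut}(A_{\lambda_0})$ acts, so solutions come in positive-dimensional families), so one would first have to slice transversally to this group action, which is why I prefer the semi-algebraic selection as the primary route.
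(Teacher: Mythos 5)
Your proof is correct, and it shares the paper's starting point --- the observation that the pairs $((\alpha_{ij}),\lambda)$ solving the isomorphism equations form a semi-algebraic subset $S$ of $\RR^{n^2+1}$ (the paper encodes invertibility by the inequation $\det((\alpha_{ij})_{ij})\neq 0$ rather than your auxiliary-variable trick, an immaterial difference) --- but from there the two arguments genuinely diverge. You project $S$ to the $\lambda$-line: by Tarski--Seidenberg the image is semi-algebraic, hence, being infinite, contains an open interval; you then invoke a selection principle (Hardt triviality, or definable choice plus cylindrical decomposition) to get a semi-algebraic section over that interval, and finish with the fact that one-variable semi-algebraic functions are analytic off a finite set. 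The paper never projects and never selects: it applies the Nash cell decomposition \cite[Proposition 2.9.10]{Alg-geom-book} to $S$ itself, so that, there being finitely many cells, some cell Nash-diffeomorphic to an open cube contains two solutions with distinct $\lambda$-coordinates; joining their preimages by a straight segment in the cube and composing with the $\lambda$-coordinate gives a differentiable function taking two distinct values, whence by the mean value theorem its derivative is nonzero somewhere, and a local inversion at such a point yields the differentiable family. The trade-off: your route is more modular and yields the stronger intermediate statement that sections exist over all but finitely many points of a whole subinterval of the image, at the price of heavier black boxes; the paper's route needs only the cell decomposition plus elementary calculus, and produces a section only near one well-chosen parameter --- which is all the lemma asks for. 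Your closing diagnosis of why the implicit-function-theorem shortcut fails (solutions come in $\operatorname{Aut}(A_{\lambda_0})$-orbits, so the relevant Jacobian is degenerate) is accurate and consistent with the paper's avoidance of that route.
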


The lemma is a consequence of the so-called Nash cell decomposition of semi-algebraic sets; we refer to Chapter 2 of \cite{Alg-geom-book} for the basics of the this theory.

\begin{proof}
Denote by $S\subset \RR^{n^2+1}$ the set of $(\alpha_{ij}(\lambda),\lambda)$ where $\lambda \in U$ and $(\alpha_{ij}(\lambda))_{i,j}\in \MM_n(\RR)$ implements and algebra-isomorphism $A_\lambda \simeq A_{\lambda_0}.$ Since  the scalar coefficients of products are rational functions in $\lambda$, the  system of equations  which encodes that $(\alpha_{ij}(\lambda))_{ij}$ is an algebra-isomorphism  consists of polynomials in $\alpha_{ij}(\lambda)$ and $\lambda$, inequations of the form $\lambda\neq x$ where $x$ is a root in a polynomial appearing in a denominator of one of the rational functions, and the polynomial inequation $\det\left(\left(\alpha_{ij}(\lambda)\right)_{ij}\right)\neq 0$. Since $U$ is an interval, the set $S$ is therefore a semi-algebraic set in $\RR^{n^2+1}$ (see  \cite{Alg-geom-book} for the relevant definitions), and by \cite[Proposition 2.9.10]{Alg-geom-book}, $S$ admits a decomposition into a disjoint union of finitely many submanifolds 
\[
S=A_1\sqcup \dots \sqcup A_n
\]
 such that each $A_i$ is either a singleton set or diffeomorphic to an open hypercube $(0,1)^{d_i}\subset \RR^{d_i}$. Since we assume that solutions exist for infinitely many $\lambda\in U$, we may, without loss of generality, assume that $A_1$ contains at least two points, $p_1:=((\alpha_{ij}(\lambda_1))_{ij},\lambda_1)$ and $p_2:=((\alpha_{ij}(\lambda_2))_{ij},\lambda_2)$, with $\lambda_1< \lambda_2$.
 Let now $\varphi\colon A_1\to (0,1)^{d_1}$ be a diffeomorphism and let $\gamma\colon [0,1] \to (0,1)^{d_1}$ be the straight line segment between $\varphi(p_1)$ and $\varphi(p_2)$, and let $\pi_{n^2+1}\colon \RR^{n^2+1}\to \RR$ denote the projection onto the last coordinate. Then the map $f\colon [0,1] \to \RR$ given by $f(t)=\pi_{n^2+1}\circ \varphi^{-1}\circ \gamma$ is continuous and differentiable on $(0,1)$ and satisfies $f(0)=\lambda_1$ and $f(1)=\lambda_2$. Hence there is a $\hat{t}\in (0,1)$ such that $f'(\hat{t})\neq 0$, and thus $f$ is invertible on a neighbourhood of $\hat{t}$. On an open interval $I$ around $\hat{\lambda}:=f(\hat{t})$,  we may  therefore consider $\psi=\varphi^{-1}\circ \gamma \circ f^{-1} \colon I\to \RR^{n^2+1}  $ and we have that $(\alpha_{ij}(\lambda),\lambda)\coloneqq (\psi(\lambda))$ is a solution, and each $\alpha_{ij}(\lambda)$ is differentiable.
\end{proof}

We now return to the concrete algebras we wish to handle, namely the cohomology algebras arising from the 1-parameter families  $1357S$ and $1357QRS_1$.  In each case, using Maple one obtains a basis for the total cohomology $\Cohom^*(\mathfrak{g_\lambda}, \RR)$ and multiplication tables for the cup product, in which all the coefficients are rational functions in $\lambda$.  For all but finitely many parameter values the vector space $\Cohom^*(\mathfrak{g}_\lambda, \RR)$ has a fixed dimension, and
for these parameter values  we may therefore consider the algebras $\Cohom^*(\mathfrak{g_\lambda}, \RR)$ as a $1$-parameter family of products $\smile_\lambda$ on a fixed vector space $V$ with a fixed basis $e_1,\dots, e_n$, thus putting us within the scope of  Lemma \ref{Lemma:Solutions}. Denote by $p_{ij}^k$ the rational functions describing the product of $e_i$ and $e_j$; i.e.~
\[
e_i \underset{\lambda}{\smile} e_j =\sum_{k=1}^n p_{ij}^k(\lambda)e_k.
\]
 Since the coefficients are rational functions, they admit  a power series expansion for all but finitely many $\lambda$, so we may construct a deformation around a chosen, nonsingular, $\lambda_0$ by shifting the variable and using the Maclaurin expansion; i.e.~by setting
 \[
 f_m(e_i,e_j) \coloneqq \sum_{k=1}^n  \left.\dfrac{d^m}{dt^m}p_{ij}^k \right\vert_{t=\lambda_0} \frac{1}{m!}  e_k
 \]
 we obtain a formal power series $F(a,b)=\sum_{m=0}^\infty f_m(a,b) t^m$ which converges for $t$ in a neighbourhood of zero  and such that the specialisation at $t=0$ is exactly $\Cohom^*(\mathfrak{g}_{\lambda_0}, \RR)$. Thus, $F$ constitutes a (formal) deformation of the algebra $\Cohom^*(\mathfrak{g}_{\lambda_0}, \RR)$, and implementing the above in Maple, it turns out that for the two families in question, $f_1$ is  non-zero for all but finitely many values of $\lambda_0$, and thus the infinitesimal here.

 In the light of Lemma \ref{Lemma:Diff}, we now need to investigate if $f_1$ is a coboundary in each of the two cases, i.e.~if there exists a linear map $g\colon \Cohom^*(\mathfrak{g}(\lambda_0),\RR)\to \Cohom^*(\mathfrak{g}(\lambda_0),\RR)$ such that
 \begin{align}\label{eq:coboundary}
 f_1(x,y)=\partial_1(g)(x,y)\coloneqq g(x)\underset{ \hspace{0.1cm}\lambda_0}{\smile} y - g(x\underset{  \hspace{0.1cm} \lambda_0}{\smile} y)+ x\underset{  \hspace{0.1cm}\lambda_0}{\smile} g(y). 
 \end{align}
 If such a map exists, it is uniquely determined by the scalars $b_{ij}$ satisfying
 \[
 g(e_i)=\sum_{j=1}^n b_{ij}e_j,
 \]
 and setting $x=e_i$ and $y=e_j$ in \eqref{eq:coboundary} yields

 \begin{align*}
 \sum_{k=1}^n  \left.\dfrac{d}{dt}p_{ij}^k \right\vert_{t=\lambda_0}   e_k &=\sum_{k=1}^n b_{ik}e_k \underset{ \hspace{0.1cm}\lambda_0}{\smile} e_j - g\left(\sum_{k=1}^n p_{ij}^k(\lambda_0)e_k\right) + \sum_{k=1}^n b_{jk} e_i \underset{ \hspace{0.1cm}\lambda_0}{\smile}e_k\\
&= \sum_{k,l=1}^n b_{ik}p_{kj}^l(\lambda_0)e_l  - \sum_{k,l=1}^n p_{ij}^k(\lambda_0) b_{kl}e_l +\sum_{k,l=1}^n b_{jk}p_{ik}^l(\lambda_0)e_l.
  \end{align*}
Comparing coefficients of the basis vectors now gives a system of linear equations in the variables $b_{ij}$ and, by construction, $f_1$ is a Hochschild 1-coboundary if and only if this system has a solution. {Both of the families $1357S$ and $1357QRS_1$ have 34-dimensional cohomology algebras, and the resulting linear systems are much bigger than what can reasonably be solved by hand (the systems contain, respectively, 7066 and 6388 equations), but using Maple, it has been verified}, that for all but a most finitely many values of $\lambda_0$ the corresponding system of equations does not have any solutions.

\begin{proof}[Proof of Theorem \ref{Thm:NonIso}]
Let $(\mathfrak{g}_\lambda)_\lambda$ be either of the two families.  For all but finitely many parameter values, $\Cohom^*(\mathfrak{g}_\lambda,\RR)$ has a fixed dimension, so let $\lambda_0$ be such a generic parameter value and choose an open interval $U$ around $\lambda_0$ consisting of generic parameter values. As argued in the beginning of this section, we may then consider the algebras $(\Cohom^*(\mathfrak{g}_\lambda,\RR))_{\lambda\in U}$ as a family of algebra structures on a fixed vector space with a fixed basis, with the property that the coefficients of products of basis vectors are given by rational functions in $\lambda$. So, if  $\Cohom^*(\mathfrak{g}_{\lambda_0},\RR)\simeq \Cohom^*(\mathfrak{g}_\lambda,\RR)$ for infinitely many other parameter values, then Lemma \ref{Lemma:Solutions} provides a $\hat{\lambda}$, an open neighbourhood $V$ around $\hat{\lambda}$ and an isomorphism $\varphi_{\lambda}=(\alpha_{ij}(\lambda))_{ij}\colon \Cohom^*(\mathfrak{g}_\lambda,\RR) \to \Cohom^*(\mathfrak{g}_{\lambda_0},\RR)$ for each $\lambda\in V$ such that every $\alpha_{ij}(\lambda)\colon U\to \RR$ is differentiable. 
{We remark that since $\dim_{\RR}\Cohom^*(\mathfrak{g}_{\lambda}, \RR)$ 
only takes finitely many values, and  since we constructed a formal deformation of $\Cohom^*(\mathfrak{g}_{\lambda_0},\RR)$ for all but finitely many $\lambda_0$, and that the associated infinitesimals were shown to be Hochschild 1-coboundaries for only finitely many $\lambda_0$, we may therefore assume that $V$ does not intersect any of these finite sets of parameter values. } 
 Upon replacing $\varphi_{\lambda}$ with $\varphi_{\lambda}\circ \varphi_{\hat{\lambda}}^{-1}$ we may assume that $\varphi_{\hat{\lambda}}$ is the identity. Considering $(\Cohom^{*}(\mathfrak{g}_\lambda,\RR))_{\lambda\in V}$ as specialisations of a formal deformation $F(-,-)=\sum_{k=0}^\infty f_k(-,-)t^k$ of $\Cohom^*(\mathfrak{g}_{\hat{\lambda}},\RR)$ as described above, 
 Lemma \ref{Lemma:Diff} now implies that the first term $f_1$  is a Hochschild 1-coboundary, contradicting what what was established in the paragraph preceding the proof. 
\end{proof}

\begin{remark}
We remark that none of the results related to Theorem \ref{Thm:NonIso} involve graded algebras or maps, but that we in fact show that no uncountable subfamily whose cohomology algebras are isomorphic, as algebras,  exists. Since graded isomorphisms are also encoded by a system of polynomial equations, the proof of Lemma \ref{Lemma:Solutions} carries over verbatim to the obvious graded restatement, but this has not been relevant for our purposes.
\end{remark}

\begin{rem}\label{rem:dim5-6} {As pointed out by an anonymous referee, it would be of interest to find two non-isomorphic Carnot Lie algebras, which have isomorphic cohomology algebras, thus providing examples of csc nilpotent Lie  groups that are distinguished by Pansu's result, but not by their associated cohomology algebras.
One such example occurs when considering the family 147E of class (i). Since the family  is Carnot, the associated Lie groups can be distinguished by Pansu's result, and the method used to compute isomorphisms for Proposition \ref{Prop:CohmIso} yields a graded isomorphism between the cohomology of all but finitely many members of the family, thus providing such an example.
\\
Performing a similar check for the five (finite) families  of Carnot Lie algebras of dimension at most $6$, whose members share the same Betti numbers,  shows that such a result cannot be obtained in dimension lower than seven. In all cases but for those Lie algebras which have the same complexification, this can be seen directly by comparing the ranks of the multiplication maps. For the pairs which share their complexification, it turns out that the associated system of equations cannot have any non-singular solution.
\\
We have also attempted to check whether or not the cohomology algebras associated to the last remaining family, 147E$_1$, are generically isomorphic, but this has  been inconclusive. The method used to prove non-isomorphism for the algebras in class (iii), shows that the infinitesimals of the associated formal deformations are (generically) coboundaries, and hence we cannot conclude that the algebras are not isomorphic. On the other hand, all attempts to find a solution to the system of equations arising when applying the method used to prove isomorphism in cohomology for the families in class (ii) have thus far been inconclusive, since we could not reduce the system to a point where no guesses have to be made, and any combination of guesses we have tried have caused  the resulting system to have no solutions.  However, as pointed out by  an anonymous referee, it is actually possible to give a complete Hochschild cohomological  characterisation of  when all (but finitely many) members of a family $\Cohom^*(\mathfrak{g}_\lambda, \RR)$ are pairwise isomorphic or not, which is likely to provide a way of settling this remaining issue, and also provide an alternative way of proving Proposition \ref{Prop:CohmIso} requiring less computational effort, since one would have to consider a system of linear equations rather than a system of polynomial equations. 
However, this is beyond the scope of the present section, but we are grateful to the referee for providing us with this perspective on the problem.
}
\end{rem}


\begin{rem}\label{rem:urlrem}
Maple worksheets verifying the computer assisted claims made above can be obtained from the authors upon request or, at the time of writing, be downloaded via  \url{www.imada.sdu.dk/~thgot/}.
\end{rem}

\def\cprime{$'$} \def\cprime{$'$}

\end{document}